\newtheorem{Th}{Theorem}[section] 
\newtheorem{Lem}[Th]{Lemma} 
\newtheorem{Prop}[Th]{Proposition} 
\newtheorem{Def}[Th]{Definition} 
\newtheorem{Prob}[Th]{Problem} 
\newtheorem{Rem}[Th]{Remark}
\def\e{\varepsilon}
\def\N{{\mathbb N}}
\def\R{{\mathbb R}}
\newcommand{\supp}{\operatorname{supp}}
\def\argmin{\mathop{\operator@font argmin}}
\begin{document}
\title{Particle dynamics with elastic collision at the boundary: existence and partial uniqueness of solutions}

\author{M.~Kimura \and P.~van Meurs \and Z.~Yang}

\date{}

\maketitle


\begin{abstract}
We consider the dynamics of point particles which are confined to a bounded, possibly nonconvex domain $\Omega$. Collisions with the boundary are described as purely elastic collisions. This turns the description of the particle dynamics into a coupled system of second order ODEs with discontinuous right-hand side. The main contribution of this paper is to develop a precise solution concept for this particle system, and to prove existence of solutions. In this proof we construct a solution by passing to the limit in an auxiliary problem based on the Yosida approximation. In addition to existence of solutions, we establish a partial uniqueness theorem, and show by means of a counterexample that uniqueness of solutions cannot hold in general.
\end{abstract} 

\section{Introduction}\label{Intro}

Our aim is to establish existence and uniqueness of solutions to a system of particles with inertia, without damping, confined to a domain $\Omega \subset \R^m$, and subject to a particle interaction force (see Figure~\ref{domain}). 
\begin{figure}[htbp]
  \begin{center} 
    \includegraphics[width=80mm]{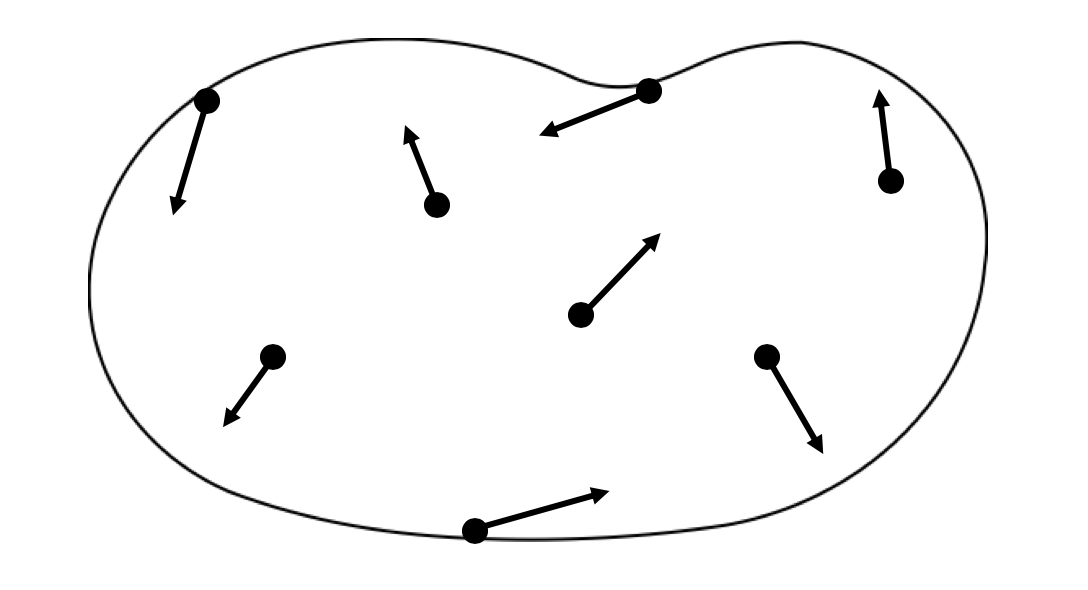}
  \end{center}
  \caption{Particles $x_i$ confined in domain $\Omega$ with force $F_i$.}
  \label{domain}
\end{figure}
The main interest is in the collision of the particles with the boundary $\partial \Omega$. We describe these collisions by elastic reflections, i.e., upon collision, the tangential velocity of the particle is conserved, but the normal velocity changes sign (see Figure \ref{case1}). The ramifications of this seemingly natural collision rule are delicate, as it allows for particles to move along the boundary (see Figure \ref{case2}. In this case, the collision rule gives rise to a normal force which keeps the particles confined), and uniqueness of solutions becomes tricky.
\begin{figure}[htbp]
\centering
\begin{minipage}[t]{0.48\textwidth}
\centering
\includegraphics[width=70mm]{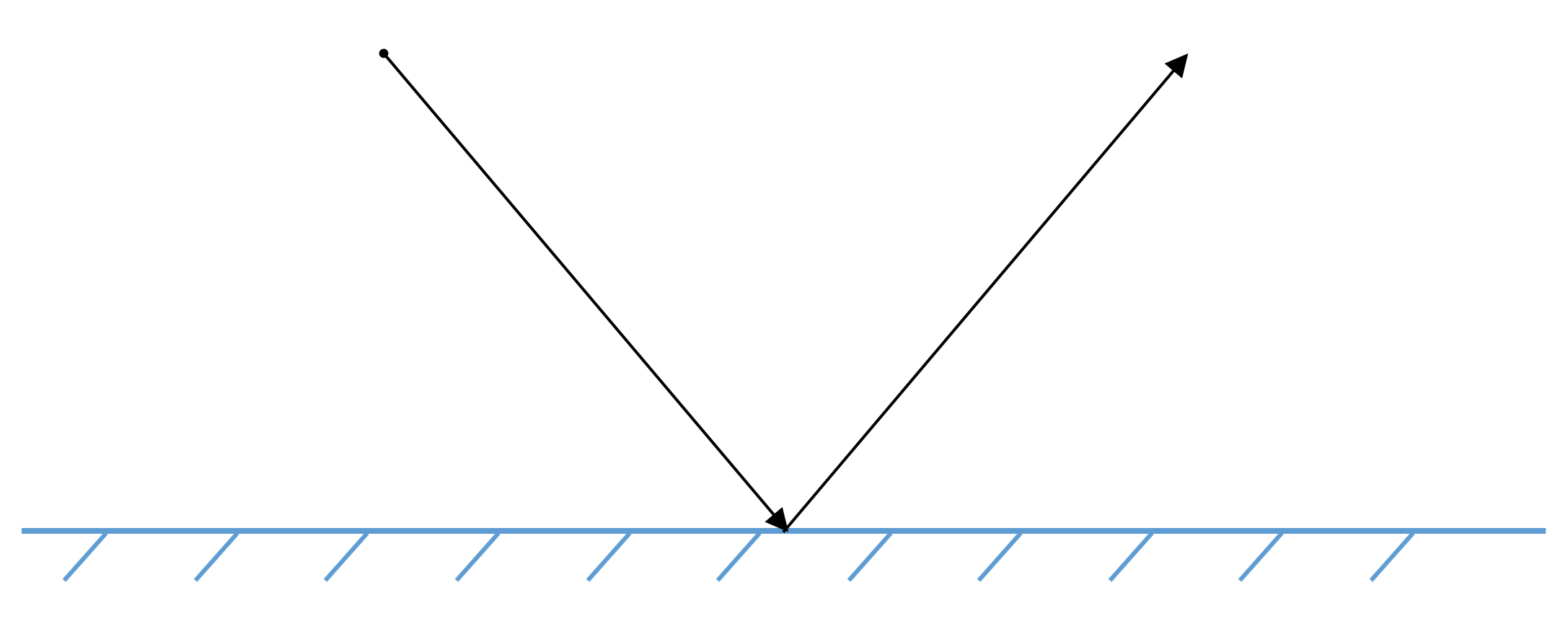}
\caption{}
\label{case1}
\end{minipage}
\begin{minipage}[t]{0.48\textwidth}
\centering
\includegraphics[width=70mm]{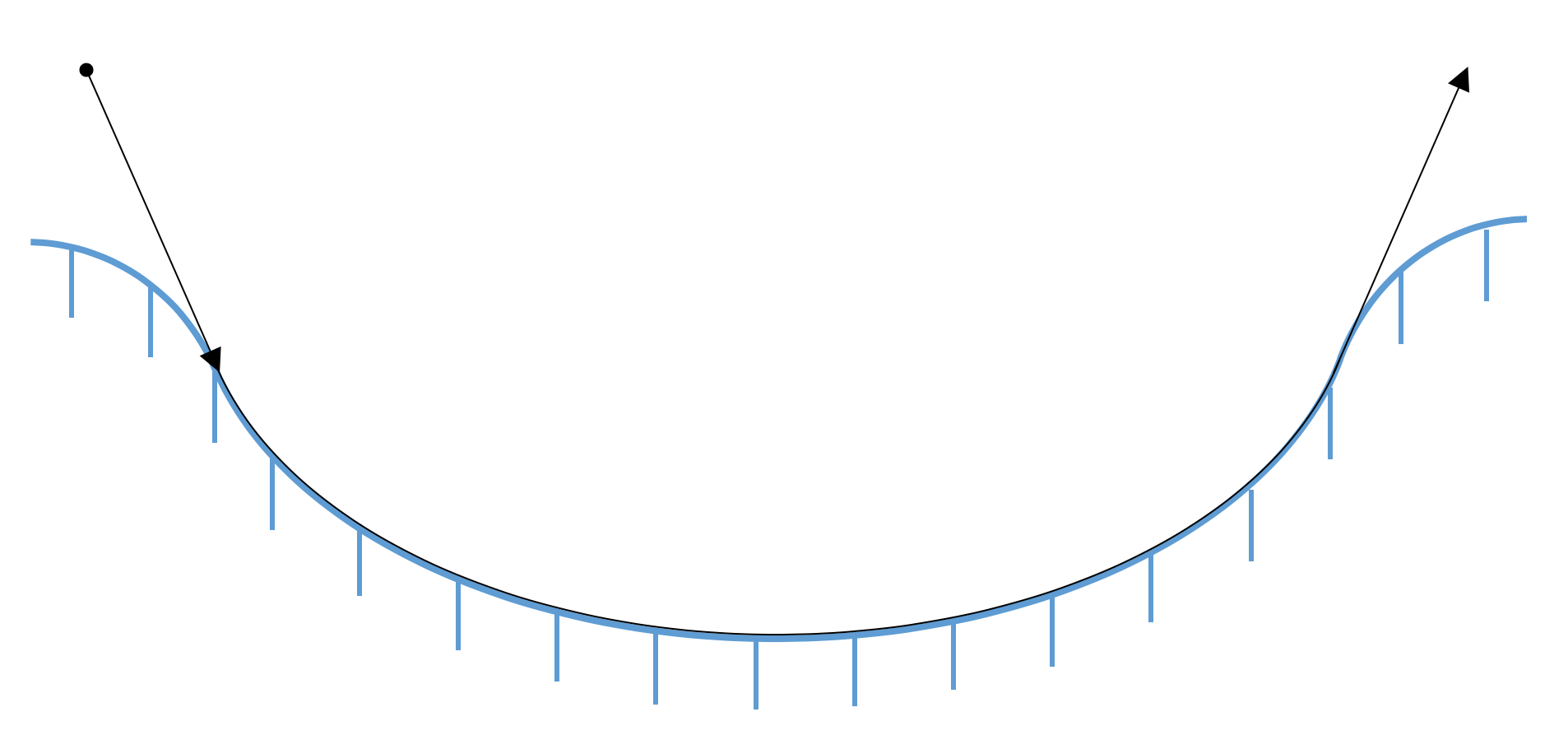}
\caption{}
\label{case2}
\end{minipage}
\end{figure}

The application that we have in mind for \eqref{P:informal} is the development of a properly motivated boundary condition in particle simulations for fluids. Indeed, in many such simulations (in particular when SPH (Smoothed Particle Hydrodynamics) \cite{gingold1977smoothed,li2007meshfree,liu2010smoothed,lucy1977numerical,monaghan1992smoothed,asai2012stabilized} is used), there are various kinds of ad hoc treatments of boundary conditions such as the use of ghost/dummy particles, wall particles, wall forces, etc. (see \cite{asai2012stabilized} and references therein). Yet, there is no established theory on how to treat properly boundary conditions for the particles which effectively model slip and no-slip conditions in the equations for the fluid. 

We describe the particle system in more detail, while leaving the precise description to Section~\ref{sec2}. We consider $n$ particles $x_1,\ldots ,x_n$ in $\overline\Omega$, and set $X:=\{x_i\}_{i=1}^n \in \overline\Omega^n \subset (\R^m)^n$ as the array of the particle's positions. For each particle $x_i$, we let the interaction force $F_i(t, X)$ depend on time and on all particle positions. The evolution of the particle system is then described by
\begin{equation} \label{P:informal}
  \ddot x_i(t) = F_i(t, X(t)) + \mu_i(t) \qquad (t > 0, \ i = 1,\ldots,n) 
\end{equation}
subject to boundary conditions at $t = 0$. The force $\mu_i(t)$ is treated as an unknown, and is to be determined from the reflection rule. Since the reflection rule only applies when $x_i(t) \in \partial \Omega$, we require $\mu_i(t) = 0$ for all $t$ such that $x_i(t) \in \Omega$.

Equation \eqref{P:informal} can be considered as a special case in which there is no damping. In previous work \cite{kimura2019particle}, we studied the overdamped limit, which is essentially obtained from \eqref{P:informal} by replacing $\ddot x_i$ by $\dot x_i$. Then, the natural confinement of the particles to the boundary does not result in a reflection rule, but in movement of particles along the boundary. From this point of view, we consider the present paper as the next step towards a general description of particle dynamics confined to bounded domains. 

Next we review relevant literature on the development of a solution concept to \eqref{P:informal}. As is obvious from the sketch in Figure \ref{case1}, we cannot expect a classical $C^2$-solution $X(t)$, and therefore we cannot rely on classical ODE theory such as the Cauchy-Peano Theorem and the Picard-Lindel\"of (Cauchy-Lipschitz) Theorem \cite{hartmanordinary}. In fact, the sketch in Figure \ref{case1} implies that $\mu_i(t)$ should have a Dirac-$\delta$ singularity at the time of collision, which means that we cannot expect $\mu_i$ to be a function, and therefore \eqref{P:informal} has to be interpreted in a weak sense.

To the best of our knowledge, the first solution concept to \eqref{P:informal} was developed by Schatzman \cite{schatzman1978class} in the framework of convex energies. In this setting, it is required that $\Omega$ is convex and that there exists an energy $E$ such that $F_i(t, X) = -\nabla_{x_i} E(t, X)$. Then, the description of the reflection rule can be transformed into an energy-conservation condition. The main result in \cite{schatzman1978class} is global existence of solutions, and a \emph{counterexample} to uniqueness of solutions. This counterexample is constructed for a scenario as in Figure \ref{case2}. We come back to this in Section~\ref{sec4}. After \cite{schatzman1978class}, Percivale \cite{percivale1991uniqueness} proved existence and uniqueness of solutions to \eqref{P:informal} for a restricted class of $F$ and for possibly nonconvex domains $\Omega$ which can be described as the $0$-level set of an analytic function. For well-posedness results of \eqref{P:informal} in a one-dimensional setting we refer to \cite{buttazzo1983approximation,carriero1980uniqueness,schatzman1998uniqueness}.

Our setting and goal is different from these results for two reasons. First, while \cite{schatzman1978class} requires a convex domain $\Omega$ with conservative field $F$, we allow for possibly nonconvex domains $\Omega$ and for nonconservative fields $F$. Since in this paper $\Omega$ is only required to be of class $C^3$ and $F$ is only required to be continuous, our setting is more general than that in \cite{percivale1991uniqueness} where analiticity of $F$ and $\Omega$ is required. Second, while we do not have a global-in-time uniqueness result as in \cite{percivale1991uniqueness}, we treat the issue of uniqueness more explicitly than in \cite{schatzman1978class} by specifying the time until which solutions are unique, and by constructing an explicit counterexample for global-in-time uniqueness.

In addition to these aims, we also develop a notion of energy conservation,  show that in particular cases the energy of solutions is conserved, and verify that both solutions in our counterexample conserve the energy. This shows that restricting the definition of solutions to those that conserve the energy cannot resolve the non-uniqueness issue.  

Our main mathematical contribution is the proof for the existence of solutions. This proof is based on \cite{schatzman1978class} and our earlier paper \cite{kimura2019particle} on the overdamped limit of \eqref{P:informal}. We construct a solution from an approximation of \eqref{P:informal} where the reflection is replaced by a regular force which pushes the particles back towards $\Omega$ whenever they leave $\Omega$. This approximate problem is based on the Yosida approximation of energies, and attains a classical solution. The core of the proof is to show that, in the limit where the size of the regular force outside of $\Omega$ tends to $\infty$, any limiting solution is a solution of \eqref{P:informal}. 

The paper is organised as follows. In Section \ref{sec2}, we define a precise solution concept for the particle system, and state our main theorems on the existence, partial uniqueness and other properties of solutions. In Section~\ref{sec3} we prove the existence of solutions. In Section~\ref{sec4}, we give a proof of partial uniqueness, and construct a counterexample to uniqueness of solutions in the general case. In Section \ref{sec5} we introduce a notion of energy conservation, and show to which extend it applies to our solution concept. Section \ref{sec:conc} contains the conclusion.


\section{Definition and main results}\label{sec2}
\setcounter{equation}{0}
Let $\Omega$ be a bounded domain in $\R^m$ with $C^3$ boundary $\partial\Omega$. We also denote the outward unit normal vector on $\partial \Omega$ by $\nu$. We consider $n$ particles $x_1,\ldots ,x_n$ in $\overline\Omega$. We put $X:=\{x_i\}_{i=1}^n \in \overline\Omega^n \subset (\R^m)^n$ as the array of the particle's positions. On $(\R^m)^n$ we define the norm
\[
\|X\| := \max_{1\le i\le n}|x_i|,
\]
where $|x_i|$ is the Euclidean distance in $\R^m$.
We denote by $X(t) = \{x_i(t)\}_{i=1}^n$ the particle positions in time. 

Let us fix on a certain particle $x_i$ with $i \in \{1,\ldots, n\}$ for the moment. The acceleration $\ddot{x}_i$ of this particle is given by the force $F_i(t, X(t))$, where 
\begin{equation}\label{1}
F_i\in C([0,T]\times \overline{\Omega}^n;\R^m)\quad(i=1,\ldots,n)
\end{equation}
is given. Here and henceforth, for any function $f \in C(K; \R^m)$ for some compact $K \subset \R^\ell$ we define the norm
\[
  \| f \|_{C(K)} := \max_{y \in K} |f(y)|,
\]
which we abbreviate by $\| f \|_\infty$ if the domain of $f$ is clear from the context.
In some cases, we impose that $F_i$ is Lipschitz continuous in $X$, i.e., that there exists an $L > 0$ such that
\begin{equation}\label{2}
|F_i(t,X)-F_i(t,Y)|\le L\|X-Y\| \quad (t\in [0,T],~X,Y\in \overline{\Omega}^n,~i=1,\ldots,n).
\end{equation} 
If $x_i$ is at the boundary $\partial \Omega$, then we impose the following ``reflection rule":
\begin{equation}\label{reflection}
\frac{d^+ x_i}{dt}(t) = \frac{d^- x_i}{dt}(t) - 2\left( \frac{d^- x_i}{dt}(t)\cdot\nu(x_i(t))\right)\nu(x_i(t)).
\end{equation}
Here, 
\begin{equation} \label{lrder}
\frac{d^{\pm}}{dt}x_i(t):= \lim_{h\to 0+}\frac{x_i(t\pm h) - x_i(t)}{\pm h}
\end{equation}
denote the left- and right-derivatives.

\begin{Prob}\label{ProbP}
Let $n \in \N$ and $T > 0$. For each $i=1, \ldots, n$, let $F_i$ be as in \eqref{1}, $x_i^{\circ} \in \Omega$ and $v_i^{\circ} \in \R^m$. Find a solution $X(t)=\{x_i(t)\}_{i=1}^n$ to
\begin{equation}\label{P}
\left\{
\begin{aligned}
\ddot{x}_i(t) &= F_i(t, X(t)) + \text{``reflection rule \eqref{reflection} on $\partial \Omega$"}\quad(t\in (0,T)),\\
x_i(0) &= x_i^{\circ}, \\
\frac{dx_i}{dt}(0) &= v_i^{\circ}
\end{aligned}
\right.
\end{equation}
for $i = 1,\ldots,n$. 
\end{Prob}

Since the reflection rule results typically in discontinuous velocities, classical solutions to Problem \ref{ProbP} will not exist in general. Therefore, we introduce a weak notion of solutions in Definition \ref{Def}. To define it, we set $\mathcal{M}_+((0,T))$ as the space of finite, non-negative Borel measures on $(0,T)$.

\begin{Def}\label{Def}
We say that $X(t)=\{x_i(t)\}_{i=1}^n$ is a solution to Problem~\ref{ProbP} if for all $i\in\{1,\ldots,n\}$, $x_i$ satisfies
\begin{enumerate}[(i)]
\item
\[
\begin{aligned}
x_i & \in W^{1,\infty}(0,T; \R^m),\\
x_i(t) & \in\overline\Omega \qquad (t\in [0,T]),\\
\dot{x}_i & \in BV(0,T;\R^m), \\
x_i(0) &= x_i^\circ, \quad \frac{d^+x_i}{dt}(0) = v_i^{\circ}.
\end{aligned}
\]
\item
Let $I_{\partial\Omega}^i:=\{t\in [0,T]~;~x_i(t)\in\partial\Omega\}$. There exists $\rho_i\in \mathcal{M}_+((0,T))$ with $supp~\rho_i\subset I_{\partial\Omega}^i$ such that in the sense of distributions, 
\[
\ddot{x}_i(t) = F_i(t,X(t)) - \rho_i(t)\nu(x_i(t)),
\]
i.e., for any test function $\psi \in C_0^{\infty}(0,T; \R^m)$, 
\begin{equation}\label{weakform}
-\int_0^T\dot{x}_i(t)\cdot \dot{\psi}(t) dt 
= \int_0^T F_i(t,X(t))\cdot\psi(t)dt 
  - \int_{I_{\partial\Omega}^i} \nu(x_i(t)) \cdot \psi(t) d\rho_i(t).
\end{equation}

\item $x_i$ satisfies the reflection rule \eqref{reflection} for all $t\in I_{\partial\Omega}^i$.
\end{enumerate}
\end{Def}

\begin{Rem}\label{remarkop}
If $X$ is a solution to Problem \ref{ProbP}, then the weak form \eqref{weakform} holds for a larger class of test functions $\psi$. Indeed, since each of the three terms in the weak form viewed as an operator on $\psi$ is a bounded linear operator from $W^{1,1}(0,T;\R^m)$ to $\R$, and since $C_0^{\infty}(0,T;\R^m)$ is dense in $W_0^{1,1}(0,T;\R^m)$, \eqref{weakform} is satisfied for any $\psi\in W_0^{1,1}(0,T;\R^m)$ and any $i=1,\ldots,n$.
\end{Rem}

First, we explain the precise interpretation of Definition \ref{Def}(i). This is needed, for instance, in Definition \ref{Def}(iii) to make sure that the corresponding left- and right-derivatives defined in \eqref{lrder} exist. By writing $x_i \in W^{1,\infty}(0,T; \R^m)$ we mean that $\dot x_i$ is the weak derivative of $x_i$, and that $x_i$ is the Lipschitz continuous representative. Furthermore, for any $u\in BV(0,T;\R^m)$, we recall from  \cite{ambrosio2000functions} Theorem 3.28 that there exists a representative $u$ for which the left- and right-limits 
\begin{equation} \label{rllims:BV}
   u(t \pm) := \lim_{h \to 0+} u(t \pm h)
 \end{equation} 
are defined for all $t \in [0,T]$ (for $t = 0$ and $t = T$, only the right- and left-limit are defined, respectively). One possible choice for this representative is given by the right-continuous function
\[
u(t) := u_0 + Du([0,t]) \quad (t \in [0,T])
\]
for a certain constant vector $u_0 \in \R^m$.
In Definition \ref{Def}(i) and elsewhere in the paper, we consider this representative.

The following proposition ties together the weak derivative and the left- and right-derivatives defined in \eqref{lrder}.

\begin{Prop}\label{Proplim}
If $x\in W^{1,\infty}(0,T;\R^m)$ and $\dot{x}\in BV(0,T;\R^m)$, then the left- and right-derivatives defined in \eqref{lrder} exist, and 
\begin{align*}
  \frac{d^{+}}{dt}x(t) &= \dot{x}(t+) \quad (t\in [0,T)), \\
  \frac{d^{-}}{dt}x(t) &= \dot{x}(t-) \quad (t\in (0,T]). \\
\end{align*}
\end{Prop}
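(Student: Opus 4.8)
The plan is to combine the fundamental theorem of calculus for Lipschitz functions with the existence of one-sided limits of $BV$ functions. Since $x \in W^{1,\infty}(0,T;\R^m)$, its Lipschitz representative is absolutely continuous, so that for all $0 \le s \le t \le T$ one has $x(t) - x(s) = \int_s^t \dot{x}(\tau)\,d\tau$, where $\dot x$ denotes the weak derivative. The value of this integral does not depend on the representative chosen for $\dot x$; in particular we may work with the good representative of $\dot x \in BV(0,T;\R^m)$ supplied by \cite{ambrosio2000functions} Theorem 3.28, for which the one-sided limits $\dot x(t+)$ and $\dot x(t-)$ from \eqref{rllims:BV} are defined at every point of $[0,T)$ and $(0,T]$ respectively.

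First I would treat the right-derivative. For $t \in [0,T)$ and small $h > 0$, the fundamental theorem of calculus gives
\begin{equation*}
\frac{x(t+h) - x(t)}{h} = \frac{1}{h}\int_t^{t+h}\dot{x}(\tau)\,d\tau .
\end{equation*}
I would then show that this average converges to $\dot x(t+)$ as $h \to 0+$. Fix $\e > 0$. Since $\dot x(t+)$ is the limit of $\dot x(\tau)$ as $\tau \to t+$, there is $\delta > 0$ with $|\dot x(\tau) - \dot x(t+)| < \e$ for all $\tau \in (t, t+\delta)$. Subtracting the constant $\dot x(t+)$ from the average and estimating under the integral sign then yields, for every $h \in (0,\delta)$,
\begin{equation*}
\left| \frac{1}{h}\int_t^{t+h}\dot{x}(\tau)\,d\tau - \dot x(t+) \right| \le \frac{1}{h}\int_t^{t+h}|\dot{x}(\tau) - \dot x(t+)|\,d\tau \le \e .
\end{equation*}
Hence $\lim_{h\to0+}(x(t+h)-x(t))/h$ exists and equals $\dot x(t+)$, which is exactly the claim $\frac{d^+}{dt}x(t) = \dot x(t+)$. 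The left-derivative is handled identically on the interval $(t-h,t)$: writing $\frac{x(t-h) - x(t)}{-h} = \frac{1}{h}\int_{t-h}^{t}\dot{x}(\tau)\,d\tau$ and invoking the existence of $\dot x(t-)$ gives $\frac{d^-}{dt}x(t) = \dot x(t-)$ for $t \in (0,T]$.

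I do not expect a genuine obstacle here; the statement is essentially the elementary fact that the average of a function over a shrinking one-sided interval converges to its one-sided limit, combined with the fundamental theorem of calculus. The only points requiring care are (a) justifying that the Lipschitz representative of $x$ is recovered from $\dot x$ by integration, which follows from $W^{1,\infty}(0,T;\R^m) \subset W^{1,1}(0,T;\R^m)$ and absolute continuity, and (b) being explicit that the one-sided limits $\dot x(t\pm)$ are taken for the good $BV$ representative fixed earlier, so that the estimate above is meaningful pointwise rather than merely almost everywhere.
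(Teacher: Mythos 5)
Your proposal is correct and follows essentially the same route as the paper: write the difference quotient via the fundamental theorem of calculus as an average $\frac{1}{h}\int_t^{t+h}\dot x(\tau)\,d\tau$ and let it converge to the one-sided limit of the good $BV$ representative of $\dot x$. The paper's proof is just a terser version of yours; your epsilon--delta justification of the convergence of the average, and your remark that the integral is representative-independent while the pointwise estimate uses the good representative, simply make explicit what the paper leaves implicit.
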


\begin{proof}
For $t\in [0,T)$, we obtain
\[
\begin{aligned}
\lim_{h\to 0+} \frac{x(t + h) - x(t)}{h}
 =\lim_{h\to 0+} \frac{1}{h} \int_t^{t + h} \dot{x} (s) ds
 =\dot{x}(t+),
\end{aligned}
\]
and thus $\frac{d^+ x}{dt} (t) = \dot{x}(t+)$.
The argument for the left-derivative is analogous.
\end{proof}

Our main result are the following theorems. They guarantee global existence and local uniqueness of the solutions to Problem~\ref{ProbP} defined in \ref{Def}.
\begin{Th}\label{exTh}
Let $T>0$, $X^{\circ}:=\{x_i^{\circ}\}_{i=1}^n \in \Omega^n$ and $V^{\circ}:=\{v_i^{\circ}\}_{i=1}^n \in (\R^m)^n$. If $F_i$ satisfies \eqref{1}, then there exists a solution $X$ as in Definition~\ref{Def} to Problem~\ref{ProbP}. 
\end{Th}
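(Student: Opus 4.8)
\emph{The plan} is to follow the penalization strategy announced in Section~\ref{Intro}: replace the hard reflection on $\partial\Omega$ by a smooth restoring force of strength $1/\e$ that is switched on only outside $\overline\Omega$, solve the resulting classical ODE for each $\e>0$, and then pass to the limit $\e\to0$. Since $\partial\Omega\in C^3$, the signed distance $d$ to $\partial\Omega$ (negative inside $\Omega$) is $C^3$ on a tubular neighborhood $U$ of $\partial\Omega$, its gradient extends $\nu$, and $|D^2 d|$ is bounded by the curvature. Using the Moreau--Yosida regularization of the indicator of $\overline\Omega$, I would introduce the penalty potential $\Phi_\e(x)=\frac1{2\e}\,\mathrm{dist}(x,\overline\Omega)^2$ and study
\[
\ddot x_i^\e = F_i(t,X^\e) - \tfrac1\e\big(x_i^\e-\pi(x_i^\e)\big),\qquad x_i^\e(0)=x_i^\circ,\ \ \dot x_i^\e(0)=v_i^\circ,
\]
where $\pi$ is the nearest-point projection onto $\overline\Omega$ (well defined near $\overline\Omega$ by the $C^3$ hypothesis, despite possible nonconvexity). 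The right-hand side is continuous and the penalty is Lipschitz, so Cauchy--Peano gives a classical $C^2$ solution locally, which the a priori bounds below extend to all of $[0,T]$.

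\emph{Uniform estimates.} The energy $E_\e(t)=\sum_i\big(\tfrac12|\dot x_i^\e|^2+\Phi_\e(x_i^\e)\big)$ satisfies $\dot E_\e=\sum_i F_i\cdot\dot x_i^\e$ because the penalty is conservative; a Gronwall argument using the boundedness of $F_i$ from \eqref{1} bounds $\sup_t|\dot x_i^\e|$ and $\Phi_\e(x_i^\e)$ uniformly in $\e$, whence $\mathrm{dist}(x_i^\e,\overline\Omega)\le C\sqrt\e\to0$ and the particles stay in $U$. The crucial estimate is a uniform bound on the total penalty impulse $\rho_i^\e((0,T)):=\int_0^T\tfrac1\e\,\mathrm{dist}(x_i^\e,\overline\Omega)\,dt$, which controls $\mathrm{Var}(\dot x_i^\e)$. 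Writing $g_i^\e:=\nu\cdot\dot x_i^\e$, the normal component of the equation gives $\tfrac1\e\,\mathrm{dist}(x_i^\e,\overline\Omega)=\nu\cdot F_i-\dot g_i^\e+(D\nu\,\dot x_i^\e)\cdot\dot x_i^\e$; integrating in time and using the velocity bound together with the curvature bound $|D\nu|\le C$ bounds the impulse independently of $\e$. This is the estimate I expect to be most delicate to make rigorous, since it must hold through genuine collisions and sliding contact alike.

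\emph{Compactness and limit.} The uniform Lipschitz bound yields, via Arzel\`a--Ascoli, a subsequence with $x_i^\e\to x_i$ uniformly; the $L^\infty$ and total-variation bounds give $\dot x_i^\e\to\dot x_i$ pointwise and weakly-$*$ with $\dot x_i\in BV$; and the impulse bound gives $\tfrac1\e\,\mathrm{dist}(x_i^\e,\overline\Omega)\,dt\rightharpoonup\rho_i$ in $\mathcal M_+((0,T))$. Since $\mathrm{dist}(x_i^\e,\overline\Omega)\to0$ uniformly, $x_i(t)\in\overline\Omega$ and $\supp\rho_i\subset I^i_{\partial\Omega}$. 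Passing to the limit in the weak form of the approximate equation, using $F_i(t,X^\e)\to F_i(t,X)$ by \eqref{1} and $\nu(\pi(x_i^\e))\to\nu(x_i)$ uniformly, yields \eqref{weakform}, i.e.\ Definition~\ref{Def}(ii). For Definition~\ref{Def}(i), the initial velocity is recovered because $x_i^\circ\in\Omega$ (open) and the velocity bound keep the penalty inactive on an interval $[0,\tau]$ uniform in $\e$, so the limit solves $\ddot x_i=F_i$ classically there and $\tfrac{d^+x_i}{dt}(0)=v_i^\circ$ by Proposition~\ref{Proplim}.

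\emph{The reflection rule.} Finally I would verify Definition~\ref{Def}(iii). From \eqref{weakform}, $\ddot x_i=F_i-\rho_i\,\nu(x_i)$ as measures, so the jump of $\dot x_i$ at any $t$ is $\dot x_i(t+)-\dot x_i(t-)=-\rho_i(\{t\})\,\nu(x_i(t))$, which is automatically normal; hence the tangential part of \eqref{reflection} holds for free. At a contact time where $\dot x_i$ is continuous, $t\mapsto d(x_i(t))$ attains its maximum $0$, forcing $\nu\cdot\dot x_i(t)=0$, which is \eqref{reflection} in that case. The remaining point --- that at a genuine jump the magnitude is exactly $\rho_i(\{t\})=2\,\dot x_i(t-)\cdot\nu$, equivalently that kinetic energy is conserved across the collision --- is where the main work lies: it must be extracted from the vanishing penalty layer, in which each excursion of $x_i^\e$ outside $\overline\Omega$ is, to leading order, a half-oscillation of a harmonic-type normal dynamics that reverses the normal velocity, while the bounded contributions of $F_i$ and the curvature act only over an $O(\sqrt\e)$ time and wash out in the limit. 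Making this reversal rigorous, uniformly over the (possibly infinitely many) collisions, is the crux of the argument.
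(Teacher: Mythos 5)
Your overall strategy coincides with the paper's: your penalized system is exactly Problem~\ref{AP} with $k=1/\e$ (note $\tfrac1\e\bigl(x-\pi(x)\bigr)=\tfrac1\e\,(d\nabla d)(x)$), your energy/Gronwall estimates and the impulse bound obtained by integrating the normal component of the equation are Lemma~\ref{l:xk}(i),(ii),(iv), and your compactness-and-limit-passage step reproduces Lemma~\ref{Lem1} and the verification of Definition~\ref{Def}(i),(ii), including the support property \eqref{supprho}. Two of your shortcuts are actually nice: deducing from the measure equation that any jump of $\dot x_i$ is purely normal gives the tangential part of \eqref{reflection} more directly than the paper (which goes through the projection $P$ and shows $\ddot{\bar x}_i\in L^\infty$), and your maximum-principle argument at contact times where $\dot x_i$ is continuous is correct.

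However, there is a genuine gap at the step you yourself flag as the crux: you never prove that at a jump $\rho_i(\{t_0\})=2\,\dot x_i(t_0-)\cdot\nu(x_i(t_0))$, i.e.\ \eqref{r:equal}. This cannot be waved through, because conditions (i), (ii) and normality of jumps are also satisfied by a ``sticky'' evolution in which the particle is absorbed at the wall (take $\rho_i(\{t_0\})=\dot x_i(t_0-)\cdot\nu$, after which the particle slides); so the reversal genuinely must be extracted from the penalty layer and does not follow from the limit equation plus confinement. Making your half-oscillation heuristic rigorous is the bulk of the paper's Section~\ref{sec3:pf}: one defines the exit and re-entry times $a^k,b^k$ of the approximate trajectory, proves $a^k\to t_0$ and $b^k-a^k\le\pi/\sqrt k$ by comparison with the explicit solution of $\ddot R^k=-kR^k+C$, shows via an energy balance inside the layer that $\dot r_i^k(a^k)+\dot r_i^k(b^k)\to 0$, and --- the part your sketch does not address at all --- identifies $\dot r_i^k(a^k)\to\dot r_i(t_0-)$ and $\dot r_i^k(b^k)\to\dot r_i(t_0+)$. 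This identification is delicate precisely because $\dot x_i^\e\to\dot x_i$ only in $L^p$ (hence pointwise only a.e., and for the precise BV representative not necessarily at the collision times themselves), so one cannot evaluate the convergence at $a^k$, $b^k$, or $t_0$ directly; the paper circumvents this with the uniform Lipschitz bound on $\dot x_i^k$ away from the excursions (Lemma~\ref{l:xk}(v)) together with auxiliary evaluation times chosen outside an exceptional null set. Finally, a case analysis is needed to close the argument: the above proves reversal when $\dot r_i(t_0-)>0$; the case $\dot r_i(t_0+)<0$ follows by running time backwards, and the remaining case $\dot r_i(t_0-)=0=\dot r_i(t_0+)$ is trivial since jumps can only be normal and downward. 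As it stands, your proposal establishes existence of a limit satisfying Definition~\ref{Def}(i),(ii), but not the reflection rule (iii), which is the substance of Theorem~\ref{exTh}.
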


\begin{Th}\label{uniTh}
Given the setting of Theorem \ref{exTh}, let $X$ be a solution to Problem~\ref{ProbP}, and set 
\[
T_0:= \min_{1\le i\le n} \inf \{ t\in I_{\partial\Omega}^i~;~\dot{x}_i(t-)\cdot \nu(x_i(t))=0\}. 
\]
If $F_i$ satisfies \eqref{2}, then any solution $Y$ to Problem~\ref{ProbP} coincides with $X$ at least up to $T_0$, i.e., $Y|_{[0,T_0]} = X|_{[0,T_0]}$.
\end{Th}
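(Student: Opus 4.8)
The plan is to argue by a continuation (maximal-interval) argument rather than by trying to enumerate the collision times; this has the advantage of sidestepping the question of whether collisions can accumulate before $T_0$. Concretely, I would set
\[
  \tau^* := \sup\{\, t\in[0,T_0] \ ;\ X|_{[0,t]}=Y|_{[0,t]} \,\},
\]
which is well defined and nonnegative because $X(0)=Y(0)=X^\circ$. The goal is to prove $\tau^*=T_0$ together with equality on the closed interval, and the whole proof reduces to deriving a contradiction from the assumption $\tau^*<T_0$ by extending the equality slightly past $\tau^*$.

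The first routine step is to upgrade equality on $[0,\tau^*)$ to the closed interval and to match one-sided velocities there. Since each $x_i,y_i$ is continuous (Definition~\ref{Def}(i)) and they agree on $[0,\tau^*)$, they agree at $\tau^*$. Working with the canonical right-continuous $BV$ representatives of $\dot x_i,\dot y_i$, the fact that they coincide a.e.\ on $(0,\tau^*)$ forces them to coincide pointwise on $[0,\tau^*)$ and to have equal left-limits at $\tau^*$; in particular $x_i(\tau^*)=y_i(\tau^*)$ and $\dot x_i(\tau^*-)=\dot y_i(\tau^*-)$ for every $i$.

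Next I would analyse the state at $\tau^*<T_0$ particle by particle, using a local $C^2$ defining function $\phi$ of $\partial\Omega$ (with $\phi<0$ in $\Omega$, $\phi=0$ and $\nabla\phi=\nu$ on $\partial\Omega$) and $g_i:=\phi\circ x_i$. A particle with $x_i(\tau^*)\in\Omega$ stays interior on a right-neighbourhood by continuity, and its velocity is continuous at $\tau^*$ because $\supp\rho_i\subset I_{\partial\Omega}^i$ carries no atom there. For a particle with $x_i(\tau^*)\in\partial\Omega$, the definition of $T_0$ gives $\dot x_i(\tau^*-)\cdot\nu\neq 0$; moreover $g_i\le 0$ attains a maximum $0$ at $\tau^*$, so by Proposition~\ref{Proplim} the left-derivative satisfies $\dot x_i(\tau^*-)\cdot\nu(x_i(\tau^*))=\tfrac{d^-}{dt}g_i(\tau^*)\ge 0$, hence $>0$. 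The reflection rule~\eqref{reflection} then yields $\dot x_i(\tau^*+)\cdot\nu=-\dot x_i(\tau^*-)\cdot\nu<0$, so $\tfrac{d^+}{dt}g_i(\tau^*)<0$ and the particle re-enters $\Omega$ on a right-neighbourhood. Since~\eqref{reflection} expresses $\dot x_i(\tau^*+)$ purely in terms of $x_i(\tau^*)$ and $\dot x_i(\tau^*-)$, it follows in all cases that $\dot X(\tau^*+)=\dot Y(\tau^*+)$.

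Combining over the finitely many particles produces a $\delta>0$ with $X(t),Y(t)\in\Omega^n$ for $t\in(\tau^*,\tau^*+\delta)$, so both measures $\rho_i$ vanish there and each solution satisfies the integral identity $\dot x_i(t)=\dot x_i(\tau^*+)+\int_{\tau^*}^t F_i(s,X(s))\,ds$ with identical data at $\tau^*$. The Lipschitz bound~\eqref{2} and a standard Gronwall estimate for the coupled system $(X,\dot X)$ then force $X=Y$ on $[\tau^*,\tau^*+\delta)$, contradicting the maximality of $\tau^*$; hence $\tau^*=T_0$ and equality holds on $[0,T_0]$. The main obstacle is precisely the boundary case of the third step: turning the measure-valued formulation of Definition~\ref{Def} near a contact time into the clean statement ``reflect, then move strictly inward'', which hinges on pinning down the sign $\dot x_i(\tau^*-)\cdot\nu>0$ via the one-sided maximum of $g_i$ and on checking that no mass of $\rho_i$ obstructs the short interior interval. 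Once the dynamics is known to be a genuine ODE on $(\tau^*,\tau^*+\delta)$, the Gronwall step is entirely routine.
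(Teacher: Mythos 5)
Your proposal is correct and takes essentially the same approach as the paper: the paper's proof also works at the first time of disagreement (its $T_1$ is your $\tau^*$), splits into the interior case and the boundary case where the definition of $T_0$ forces $\dot{x}_i(T_1-)\cdot\nu(x_i(T_1))>0$, applies the reflection rule \eqref{reflection} to conclude that the particle strictly re-enters $\Omega$, and then invokes classical Lipschitz/Gronwall uniqueness on a short interior interval to reach a contradiction. Your explicit supremum formulation and the sign argument via $g_i$ merely spell out steps the paper leaves implicit (the latter being Proposition~\ref{lemofsol}, whose proof the paper omits).
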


We end this section with establishing two properties of solutions to Problem \ref{ProbP}. The first one is the observation that collision with the boundary happens always wihting $\overline \Omega$. The proof is obvious; we omit it. 

\begin{Prop}\label{lemofsol}
If $X$ is a solution to Problem~\ref{ProbP},  then for any $i\in \{1,\ldots,n\}$ and any $t\in I_{\partial\Omega}^i$, $x_i$ satisfies
\[
\frac{d^-}{dt}x_i(t)\cdot \nu(x_i(t) ) \ge 0\quad \text{and}\quad\frac{d^+}{dt}x_i(t)\cdot \nu(x_i(t) ) \le 0
\]
\end{Prop}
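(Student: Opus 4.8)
The idea is that the pointwise constraint $x_i(t)\in\overline\Omega$ forces the trajectory to touch $\partial\Omega$ from the inside: approaching a collision time the velocity cannot point strictly outward, and leaving it the velocity cannot point strictly inward. The plan is to make this precise by composing $x_i$ with the signed distance function to $\partial\Omega$ and observing that a collision time is a maximum of this composition.

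First I would introduce the signed distance function $d$ to $\partial\Omega$, taken positive outside $\Omega$ and negative inside. Since $\Omega$ is bounded and $\partial\Omega\in C^3$, it is classical that $d$ is $C^1$ (indeed $C^3$) in a tubular neighbourhood $U$ of $\partial\Omega$, with $\nabla d=\nu$ on $\partial\Omega$. Fix $i$ and $t_0\in I_{\partial\Omega}^i$; because $x_i^\circ\in\Omega$ we have $t_0>0$, and by continuity of $x_i$ the trajectory stays in $U$ for $t$ near $t_0$. Define $g(t):=d(x_i(t))$. By Definition~\ref{Def}(i) we have $x_i(t)\in\overline\Omega$ for all $t$, hence $g(t)\le 0$, while $g(t_0)=0$ since $x_i(t_0)\in\partial\Omega$. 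Thus $t_0$ is a (global) maximum of $g$.

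Next I would pass the one-sided difference quotients of $g$ through a one-sided chain rule. By Proposition~\ref{Proplim} the one-sided derivatives $\frac{d^\pm x_i}{dt}(t_0)$ exist (the right one for $t_0<T$). For the left derivative, the mean value theorem applied to the $C^1$ function $d$ along the segment joining the nearby points $x_i(t_0-h)$ and $x_i(t_0)$ gives
\[
\frac{g(t_0)-g(t_0-h)}{h}=\nabla d(\xi_h)\cdot\frac{x_i(t_0)-x_i(t_0-h)}{h},
\]
where $\xi_h$ lies on that segment. Letting $h\to 0+$, continuity of $x_i$ yields $\xi_h\to x_i(t_0)$, and continuity of $\nabla d$ yields $\nabla d(\xi_h)\to\nu(x_i(t_0))$, so the right-hand side converges to $\nu(x_i(t_0))\cdot\frac{d^-x_i}{dt}(t_0)$. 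Since $g(t_0)=0\ge g(t_0-h)$, every difference quotient is $\ge 0$, whence $\frac{d^-x_i}{dt}(t_0)\cdot\nu(x_i(t_0))\ge 0$. The right-derivative inequality follows identically, now from $g(t_0)=0\ge g(t_0+h)$, which makes the quotients $\frac{g(t_0+h)-g(t_0)}{h}$ nonpositive.

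The only point that warrants care is the regularity of the signed distance function, but this is supplied by the $C^3$ assumption on $\partial\Omega$; everything else is the elementary fact that composing a $C^1$ function with a Lipschitz curve possessing one-sided derivatives obeys a one-sided chain rule. I therefore expect no genuine obstacle, consistent with the authors' remark that the statement is obvious.
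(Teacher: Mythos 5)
Your proof is correct. The paper in fact gives no proof of Proposition~\ref{lemofsol} (it is dismissed as ``obvious''), and your argument---noting that the constraint $x_i(t)\in\overline\Omega$ from Definition~\ref{Def}(i) makes a collision time $t_0$ a maximum of $t\mapsto d_s(x_i(t))$, invoking Proposition~\ref{Proplim} for the existence of the one-sided derivatives, and passing the signs of the one-sided difference quotients through the mean value theorem for the $C^1$ signed distance function---supplies precisely the details the authors omitted. Two cosmetic remarks: the paper reserves $d$ for $\mathrm{dist}(\cdot,\overline\Omega)$ and writes $d_s$ for the signed distance you use, and at $t_0=T$ only the left-derivative inequality is meaningful, which you correctly flag.
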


The following proposition states that, while the velocity of the particles may be discontinuous, the speed is continuous. 

\begin{Prop}\label{speed}
Let $X$ be a solution to Problem \ref{ProbP}. Then, for any $i\in \{1,\ldots,n\}$, there exist  $\alpha_i \in C^0([0,T])$ such that
\begin{equation}\label{lrlimit}
\alpha_i(t)=\left| \frac{d^+}{dt}x_i(t)\right| = \left| \frac{d^-}{dt}x_i(t)\right|\quad (t\in (0,T)).
\end{equation}
We denote $|\dot{x}_i|(t):=\alpha_i(t)$ and call it the speed of particle $x_i$ at $t$.
\end{Prop}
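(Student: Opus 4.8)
The plan is to establish the pointwise identity in \eqref{lrlimit} for every $t\in(0,T)$ first, and then to promote the common value to a continuous function of $t$ by exploiting the $BV$ structure of $\dot x_i$. Throughout I would write $v:=\dot x_i$ for the good right-continuous representative, so that by Proposition \ref{Proplim} one has $\frac{d^+}{dt}x_i(t)=v(t+)$ and $\frac{d^-}{dt}x_i(t)=v(t-)$, with both one-sided limits existing for every $t$. The two ingredients I intend to combine are: (a) the reflection rule is a reflection across a hyperplane, hence preserves magnitude; and (b) the jumps of $v$ are confined to $I_{\partial\Omega}^i$, where (a) applies.

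For the pointwise identity I would distinguish two cases. If $t\notin I_{\partial\Omega}^i$, then since $I_{\partial\Omega}^i$ is closed (it is the preimage of $\partial\Omega$ under the continuous map $x_i$) and contains $\supp\rho_i$, the measure $\rho_i$ vanishes on a neighbourhood of $t$; by Definition \ref{Def}(ii) the derivative measure is $Dv=F_i(\cdot,X(\cdot))\,dt$ there, and as $\tau\mapsto F_i(\tau,X(\tau))$ is continuous, $v$ is $C^1$ near $t$, so $v(t-)=v(t+)$ and the identity is trivial. If $t\in I_{\partial\Omega}^i$, the reflection rule \eqref{reflection}, read through Proposition \ref{Proplim}, gives $v(t+)=v(t-)-2\bigl(v(t-)\cdot\nu\bigr)\nu$ with $\nu=\nu(x_i(t))$; expanding $|v(t+)|^2$ and using $|\nu|=1$ shows the two cross terms cancel, so $|v(t+)|^2=|v(t-)|^2$. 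In both cases $|v(t-)|=|v(t+)|$, and I would define $\alpha_i(t)$ to be this common value; equivalently $\alpha_i=|v|$.

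It then remains to prove $\alpha_i=|v|\in C^0([0,T])$. Since $v$ is right-continuous with left limits, so is $\phi:=|v|$, and by continuity of the norm $\phi(t-)=|v(t-)|$. Thus $\phi$ is continuous at $t$ exactly when $|v(t-)|=|v(t+)|$, which is precisely what the previous paragraph yields for every $t\in(0,T)$; hence $\phi$ has no jump in $(0,T)$. For the endpoints I would use that $\rho_i$ is a measure on the \emph{open} interval $(0,T)$, so it carries no mass at $0$ or $T$, while $F_i(\cdot,X(\cdot))\,dt$ is nonatomic; therefore $Dv$ has no atom at $0$ or $T$, so $v$, and hence $\phi$, is right-continuous at $0$ and left-continuous at $T$. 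This gives $\phi\in C^0([0,T])$.

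The computation in the boundary case and the confinement of jumps are elementary, so the step that genuinely needs care—and the main obstacle—is the continuity argument. The crucial observation is that the jumps of $v$ occur exactly at the atoms of $Dv$, which, because the part $F_i(\cdot,X(\cdot))\,dt$ is absolutely continuous, coincide with the atoms of $\rho_i\subset I_{\partial\Omega}^i$. One must note in particular that a possible diffuse singular part of $\rho_i$ keeps $v$ (and hence $\phi$) continuous and therefore does not threaten continuity of the speed. With the jumps thus confined to $I_{\partial\Omega}^i$, where reflection preserves magnitude, the continuity of $\phi$ follows and completes the proof.
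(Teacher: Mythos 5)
Your proposal is correct and follows essentially the same route as the paper: the same two-case analysis for the pointwise identity (interior regularity where $\rho_i$ vanishes, and the norm-preserving reflection \eqref{reflection} on $I_{\partial\Omega}^i$, which you verify by expanding the square rather than by the paper's tangential/normal Pythagoras argument), followed by the same use of the c\`adl\`ag structure of the $BV$ representative of $\dot x_i$ to convert ``no jumps in $|\dot x_i|$'' into continuity of the speed.
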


\begin{proof}
Fix $i\in\{1,\ldots,n\}$. If $t \in (0,T)$ is such that $x_i(t)\in \Omega$, then by continuity there exists a $\delta>0$ such that $x_i(s)\in \Omega$ for all $|s-t| < \delta$. Hence, $\supp \rho_i \cap (t_0-\delta,t_0+\delta) = \emptyset$, and thus $x_i \in C^2((t_0-\delta,t_0+\delta) ;\R^m)$. \eqref{lrlimit} follows. 

If $t \in (0,T)$ is such that $x_i(t)\in \partial\Omega$, then \eqref{reflection} holds. For any vector $\tau \in \R^m$ tangent to $\partial \Omega$ at $x_i(t)$, we obtain from \eqref{reflection} that
\[
  \frac{d^+}{dt}x_i(t) \cdot \tau =  \frac{d^-}{dt}x_i(t) \cdot \tau.
\]
For the normal direction we obtain
\[
  \frac{d^+}{dt}x_i(t) \cdot \nu(x_i(t)) =  -\frac{d^-}{dt}x_i(t)  \cdot \nu(x_i(t)).
\]
By Pythagoras' Theorem, \eqref{lrlimit} follows.

Next we claim that $\frac{d^+}{dt}x_i$ is right continuous on $[0,T)$ and that  $\frac{d^-}{dt}x_i$ is left continuous on $(0,T]$. Proposition \ref{speed} follows directly from this claim and \eqref{lrlimit}. We prove the claim first for $\frac{d^+}{dt}x_i$. Let $t \in [0,T)$ and $0 < h < T-t$. Recalling that $\dot x_i$ satisfies \eqref{rllims:BV}, we obtain from Proposition \ref{Proplim} that
\begin{align*}
  \left| \frac{d^+}{dt} x_i(t)\right|
  &= |\dot{x}_i(t+)|
  = \lim_{\e \to 0+} |\dot{x}_i(t+\e)|, \quad \text{and} \\
  \left| \frac{d^+}{dt} x_i(t+h)\right|
  &= |\dot{x}_i((t+h)+)|
  = \lim_{\e \to 0+} |\dot{x}_i(t+h+\e)|.
\end{align*}
Then, taking the limit $h \to 0+$, we obtain that $\frac{d^+}{dt}x_i$ is right continuous at $t$. By a similar argument it follows that $\frac{d^-}{dt}x_i$ is left continuous on $(0,T]$.
\end{proof}


\section{Global existence of solutions}
\label{sec3}
\setcounter{equation}{0}
In this section, we prove Theorem~\ref{exTh}. Before giving the proof in Section \ref{sec3:pf}, we first treat two preliminary sections; one on distance functions on $\Omega$ (Section \ref{sec3:Om}) and one on an approximate problem (Section \ref{sec3:Pk}). 

\subsection{Distance functions with respect to $\Omega$ and $\partial\Omega$}
\label{sec3:Om}
We define distance functions on $\R^m$ with respect to $\Omega$ and $\partial\Omega$. In general, for $K\subset\R^m$ and $\varepsilon>0$, we define
\[
\text{dist}(x,K):=\inf_{y\in K}|x-y|
\]
as the distance function with respect to $K$, and 
\[
N^{\varepsilon}(K):=\{x\in\R^m~;~\text{dist}(x,K)<\varepsilon\}
\]
as a neighborhood around $K$, including $K$ itself. Related to $\Omega$, we set
\begin{align*}
    d &: \R^m\to [0,\infty), & d(x) &:=\mathrm{dist}(x,\overline{\Omega}) ,\\[5pt]
    d_s &: \R^m\to \R, & d_s(x) &:=\left\{
    \begin{aligned}
      \mathrm{dist}(x,\partial\Omega)\quad(x\notin {\Omega}) &,\\
      -\mathrm{dist}(x,\partial\Omega)\quad(x\in {\overline \Omega}) &,
    \end{aligned}
    \right.
\end{align*}
where $d$ is the distance function with respect to $\Omega$, and $d_s$ is the \emph{signed} distance function with respect to $\partial \Omega$.

It is well-known that $d, d_s \in \mathrm{Lip}(\R^m)$ and that $d_s$ may be more regular close to $\partial \Omega$. In particular, since $\partial\Omega\in C^3$, there exists $\varepsilon>0$ such that $d\in C^3(\overline{N^{\varepsilon}(\partial\Omega)}\setminus\Omega)$ and $d_s\in C^3(\overline{N^{\varepsilon}(\partial\Omega)})$. We fix this $\varepsilon$ in the remainder of Section \ref{sec3}.

Next we build several functions from $d$ and $d_s$ for later use. First, we introduce
\begin{equation}\label{df}
d\nabla d : \overline{N^{\varepsilon}(\Omega)} \to \R^m, \qquad
(d\nabla d)(x):=\left\{
\begin{aligned}
& d(x)\nabla d(x)
& (x\in\overline{N^{\varepsilon}(\partial\Omega)}\setminus\overline{\Omega}),\\
& 0
& (x\in\overline{\Omega}).
\end{aligned}
\right.
\end{equation}
Since $\nabla d$ is not defined at $\partial \Omega$, we cannot split this function as the product of $d$ and $\nabla d$. In Proposition 2.5 in \cite{kimura2019particle} it is shown that the `combined' function $d\nabla d$ has the following regularity:

\begin{Prop}\label{d}
The function $d\nabla d$ defined in \eqref{df} satisfies
\[
d\nabla d\in \mathrm{Lip}(\overline{N^{\varepsilon}(\Omega)};\R^m).
\]
\end{Prop}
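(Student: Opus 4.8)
The plan is to reduce the combined function $d\nabla d$ to the product of the signed distance $d_s$ (which is $C^3$ near $\partial\Omega$) and its gradient, thereby moving all the irregularity onto the scalar positive-part map $t\mapsto t^+:=\max(t,0)$, which is globally $1$-Lipschitz. First I would record the interplay of $d$ and $d_s$ on $N^\varepsilon(\partial\Omega)$: for $x\notin\overline\Omega$ one has $d(x)=\mathrm{dist}(x,\partial\Omega)=d_s(x)>0$, while for $x\in\overline\Omega$ one has $d(x)=0$ and $d_s(x)\le 0$; hence $d=d_s^+$ on $\overline{N^\varepsilon(\partial\Omega)}$. Moreover, on the open set $\{d_s>0\}=N^\varepsilon(\partial\Omega)\setminus\overline\Omega$ the functions $d$ and $d_s$ coincide and are $C^3$, so $\nabla d=\nabla d_s$ there.

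Using these facts I would establish the key identity $d\nabla d=d_s^+\,\nabla d_s$ on all of $\overline{N^\varepsilon(\partial\Omega)}$. On $\{d_s>0\}$ it is immediate from $d=d_s$ and $\nabla d=\nabla d_s$. On $\overline\Omega$ the left-hand side is $0$ by definition \eqref{df}, while the right-hand side vanishes because $d_s^+=0$ there and $\nabla d_s$ is bounded (as $d_s\in C^3(\overline{N^\varepsilon(\partial\Omega)})$); continuity across $\partial\Omega$ matches the two sides. The gain is that $\nabla d_s$ is now a genuine continuous vector field across $\partial\Omega$, so the sole non-smoothness is carried by the scalar factor $d_s^+$.

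I would then conclude Lipschitz continuity. The factor $d_s^+$ is $1$-Lipschitz on $\R^m$ (since the signed distance $d_s$ is $1$-Lipschitz and $t\mapsto t^+$ is $1$-Lipschitz), and $\nabla d_s\in C^2$ is Lipschitz and bounded on the compact set $\overline{N^\varepsilon(\partial\Omega)}$; a product of bounded Lipschitz functions is Lipschitz, which settles the estimate there. To obtain it on all of $\overline{N^\varepsilon(\Omega)}$, note this set is covered by $\overline{N^\varepsilon(\partial\Omega)}$ together with $\overline\Omega$, where $d\nabla d\equiv 0$. The cleanest way to assemble a single global constant is to extend $\nabla d_s|_{\overline{N^\varepsilon(\partial\Omega)}}$ to a bounded Lipschitz field $W$ on $\R^m$ (e.g.\ componentwise McShane extension) and observe that $d_s^+W$ is globally Lipschitz and agrees with $d\nabla d$ on all of $\overline{N^\varepsilon(\Omega)}$; in particular, on the deep interior $\overline\Omega\setminus N^\varepsilon(\partial\Omega)$ the condition $d_s^+=0$ forces both to vanish, so no case analysis on segments crossing $\partial\Omega$ is needed.

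I expect the main obstacle to be conceptual rather than computational: $\nabla d$ is itself discontinuous across $\partial\Omega$, jumping from the outward unit normal to $0$, so $d\nabla d$ cannot be treated naively as a product of two Lipschitz functions. The identity $d\nabla d=d_s^+\nabla d_s$ is precisely the device that transfers all the irregularity into the $1$-Lipschitz scalar $d_s^+$ while replacing the discontinuous $\nabla d$ by the smooth $\nabla d_s$; once this reformulation is in place, the remaining estimates are routine, with the global extension $d_s^+W$ handling the gluing across the two regions covering $\overline{N^\varepsilon(\Omega)}$.
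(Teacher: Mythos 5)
Your proof is correct, but note that the paper itself does not actually prove Proposition \ref{d}: it defers entirely to Proposition 2.5 of \cite{kimura2019particle}. So your argument should be judged as a self-contained alternative, and as such it works: the identity $d\nabla d = d_s^+\,\nabla d_s$ on $\overline{N^{\varepsilon}(\partial\Omega)}$ (together with the trivial vanishing on $\overline\Omega$) is exactly the right device, since it isolates the kink in the $1$-Lipschitz scalar $d_s^+$ and replaces the discontinuous field $\nabla d$ by the $C^2$ field $\nabla d_s$, after which the pointwise product estimate $|fg(x)-fg(y)|\le \|f\|_\infty|g(x)-g(y)|+\|g\|_\infty|f(x)-f(y)|$ (which needs no convexity of the underlying set) finishes the job. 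Two small points deserve tightening. First, your assertion that $\nabla d_s\in C^2$ is ``Lipschitz on the compact set $\overline{N^{\varepsilon}(\partial\Omega)}$'' is true but not automatic from a bounded Hessian alone, because the tube is not convex; justify it either by noting that $d_s$ is $C^3$ on an open neighborhood of this compact set (shrink $\varepsilon$ if necessary) and using a finite covering by balls plus the far-apart-points trick, or by quasiconvexity of the tube. Second, $d_s^+W$ is not \emph{globally} Lipschitz on $\R^m$, since $d_s^+$ is unbounded at infinity; but this is harmless, as the product estimate only requires both factors to be bounded and Lipschitz on the compact set $\overline{N^{\varepsilon}(\Omega)}$ (or on a large closed ball containing it). Incidentally, you could avoid the McShane extension altogether by using the paper's own extension $d_s^{\varepsilon}\in C^3(\R^m)$ from \eqref{depsilon}: one checks just as you do that $d\nabla d = (d_s^{\varepsilon})^+\nabla d_s^{\varepsilon}$ on all of $\overline{N^{\varepsilon}(\Omega)}$, and both factors are then bounded and Lipschitz on any large closed ball, which streamlines the gluing step.
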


Next we construct the following extension $d_s^{\varepsilon}\in C^3(\R^m)$ of $d_s$:
\begin{equation}\label{depsilon}
d_s^{\varepsilon}(x) \left\{
\begin{aligned}
 & =d_s(x)
 & \big(x &\in N^{\varepsilon}(\partial \Omega)\big), \\
 & \ge \varepsilon 
 & \big(x &\in\R^m\setminus N^{\varepsilon}(\Omega)\big),\\
 & \le -\varepsilon 
 & \big(x &\in\Omega\setminus N^{\varepsilon}(\partial \Omega)\big).
\end{aligned}
\right.
\end{equation}
In what follows, we denote for a function $f : \R^m \to \R$ by $\nabla^2 f$ the Hessian matrix, and by $\nabla^3 f$ the tensor
\[ [\nabla^3 f(x)]_{ijk} = \partial_{x_i} \partial_{x_j} \partial_{x_k} f(x). \]

\begin{Lem}\label{Weingarten}
$\nabla^2 d_s^{\varepsilon} \nabla d_s^{\varepsilon}$ vanishes on $N^{\varepsilon}(\partial\Omega)$, i.e. 
\[
\nabla^2 d_s^{\varepsilon}(x) \nabla d_s^{\varepsilon}(x) = 0
\qquad ( x\in N^{\varepsilon}(\partial\Omega) ).
\]
\end{Lem}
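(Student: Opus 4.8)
The key geometric fact is that near $\partial\Omega$, the signed distance function $d_s$ has gradient of constant unit length. The plan is to exploit the identity $|\nabla d_s^\varepsilon|^2 = 1$ on $N^\varepsilon(\partial\Omega)$, which holds because $d_s^\varepsilon = d_s$ there by \eqref{depsilon}, and because it is classical that the signed distance to a $C^3$ hypersurface satisfies the eikonal equation $|\nabla d_s|=1$ in the tubular neighborhood where it is $C^3$ (here $\overline{N^\varepsilon(\partial\Omega)}$, by the regularity assertion preceding Proposition~\ref{d}).

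The main step is then a differentiation argument. Starting from
\[
  \sum_{k=1}^m \big(\partial_{x_k} d_s^\varepsilon(x)\big)^2 = 1
  \qquad (x \in N^\varepsilon(\partial\Omega)),
\]
I would differentiate both sides with respect to $x_j$. Since $d_s^\varepsilon \in C^3(\R^m)$, this is legitimate and yields
\[
  2\sum_{k=1}^m \partial_{x_j}\partial_{x_k} d_s^\varepsilon(x)\,\partial_{x_k} d_s^\varepsilon(x) = 0
  \qquad (j = 1,\ldots,m).
\]
The left-hand side is exactly $2\big[\nabla^2 d_s^\varepsilon(x)\,\nabla d_s^\varepsilon(x)\big]_j$, so the product vanishes componentwise, which is the claim. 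The name ``Weingarten'' for the lemma reflects that this identity is the analytic incarnation of the fact that the shape operator of the level sets annihilates the normal direction.

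I expect the only genuine obstacle to be justifying the eikonal equation $|\nabla d_s^\varepsilon| = 1$ on all of $N^\varepsilon(\partial\Omega)$, rather than just off $\partial\Omega$. Away from $\partial\Omega$ this is standard, but I would want it to hold up to and including the boundary; this is guaranteed by the stated $C^3$ regularity of $d_s$ on $\overline{N^\varepsilon(\partial\Omega)}$, since $|\nabla d_s| = 1$ holds on the dense open set $N^\varepsilon(\partial\Omega)\setminus\partial\Omega$ and extends by continuity of $\nabla d_s$. Once this equation is in hand, the differentiation step is routine and the remaining algebra is immediate.
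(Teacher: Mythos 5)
Your proof is correct, but it takes a genuinely different route from the paper's. You differentiate the eikonal equation $|\nabla d_s^{\varepsilon}|^2 = 1$, valid on $N^{\varepsilon}(\partial\Omega)$ because $d_s^{\varepsilon}$ coincides there with the signed distance function; since this identity holds on an open set, componentwise differentiation is legitimate and immediately yields $2\,\nabla^2 d_s^{\varepsilon}(x)\nabla d_s^{\varepsilon}(x) = 0$. The paper instead uses the fact that $\nabla d_s^{\varepsilon}$ is constant along any line piercing $\partial\Omega$ perpendicularly, and differentiates $t \mapsto \nabla d_s^{\varepsilon}\big(x + t\nabla d_s^{\varepsilon}(x)\big)$ at $t=0$ to obtain $\nabla^2 d_s^{\varepsilon}(x)\nabla d_s^{\varepsilon}(x) = 0$ directly. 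Both are one-line computations once a classical property of the signed distance function in a tubular neighborhood is granted, and the two properties are equivalent in this setting: constancy of $\nabla d_s$ along normal lines gives the eikonal equation by evaluating at the foot point on $\partial\Omega$ (where $\nabla d_s = \nu$), and conversely the gradient of a $C^2$ solution of the eikonal equation is constant along its characteristics, which are exactly these normal lines. Your route has the advantage that the eikonal equation is the more standard and more easily cited fact (it is stated, e.g., in the appendix of Gilbarg--Trudinger on distance functions, and in fact holds on the whole tubular neighborhood, so your careful continuity argument across $\partial\Omega$ could also simply be replaced by a citation). The paper's route is more geometric: it exhibits the normal direction as a direction in which $\nabla d_s^{\varepsilon}$ does not vary at all, which explains directly why the Hessian (the analytic incarnation of the Weingarten map, as you note) annihilates the normal.
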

\begin{proof}
Let $x\in N^{\varepsilon}(\partial\Omega)$. Since $\nabla d_s^\e$ is constant along any line which pierces through $\partial \Omega$ in perpendicular direction,
\begin{equation*}
  0 
  = \frac d{dt} \nabla d_s^\e (x + t \nabla d_s^\e(x)) \big|_{t=0}
  = \nabla^2 d_s^\e (x) \nabla d_s^\e (x).
\end{equation*}
\end{proof}

The third set of functions that we introduce is related to the change of variables illustrated in Figure \ref{Pmap}.

\begin{figure}[htbp]
  \begin{center} 
    \includegraphics[width=90mm]{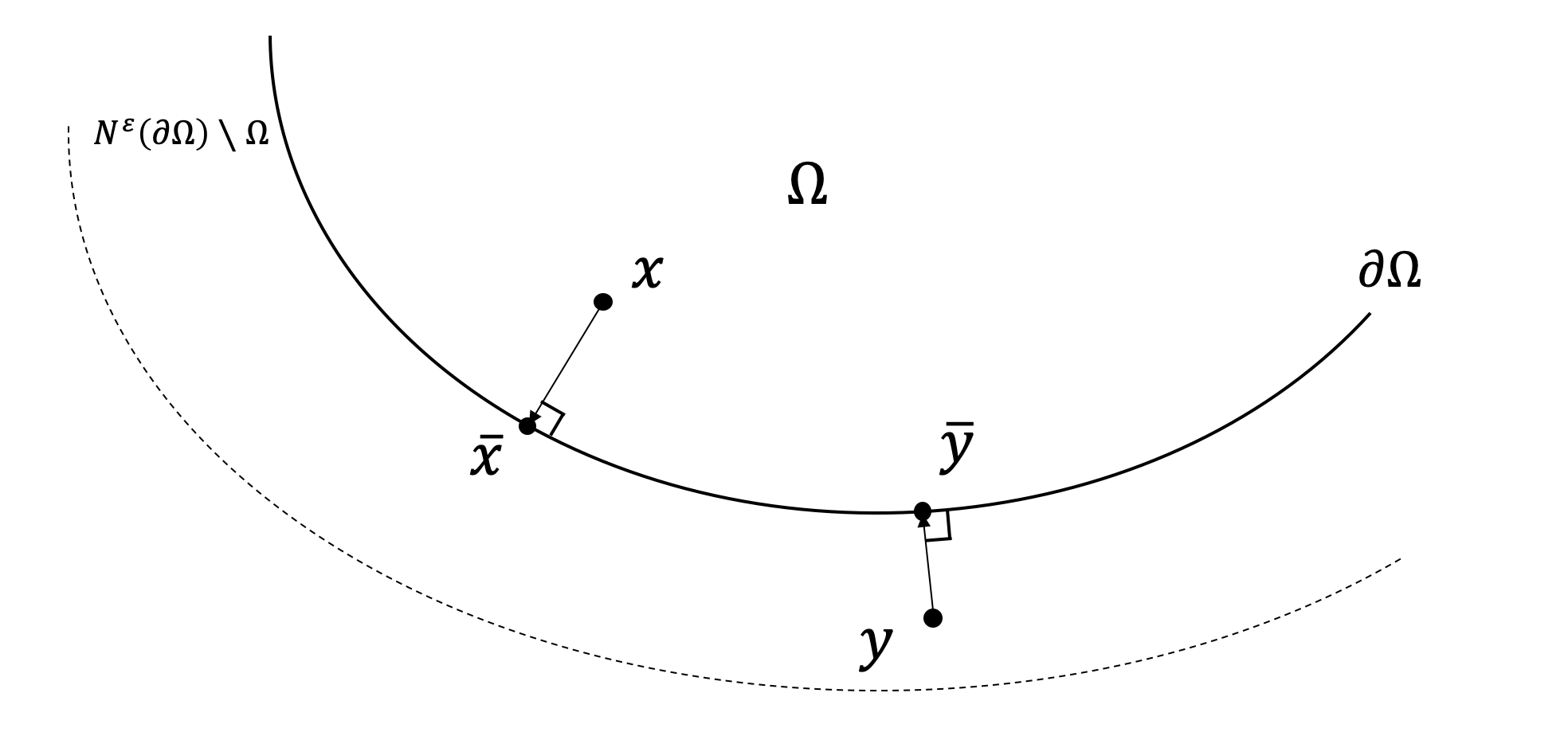}
  \end{center}
  \caption{Sketch of the variable transformation \eqref{xbar:r} from $x$ to $(\bar x, r)$.}
  \label{Pmap}
\end{figure}

\begin{Prop}\label{Phi}
For $\partial\Omega \in C^l~(l\ge3)$, the function 
\[
\Phi : \partial\Omega\times (-\e,\e)\to N^{\e}(\partial\Omega),
\quad \Phi(\bar x, r) = \bar x + r \nu(\bar x)
\]
is a $C^{l-1}$ diffeomorphism. Moreover, the inverse is given by
\begin{equation}\label{xbar:r}
\Phi^{-1}(x) = (P(x), d_s(x)) \qquad(x\in N^\e(\partial\Omega))
\end{equation}
where $P\in C^{l-1}(N^\e(\partial\Omega); \R^m)$ is given by
\[
  P(x) = x - d_s(x)\nabla d_s(x) \in\partial\Omega.
\]  
\end{Prop}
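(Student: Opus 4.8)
The plan is to avoid an abstract appeal to the global inverse function theorem and instead exploit that a candidate inverse is already written down in \eqref{xbar:r}: I would verify directly that $\Phi$ and the map $x \mapsto (P(x), d_s(x))$ are two-sided inverses, and establish the $C^{l-1}$ regularity of each separately. Every step reduces to the structure of the signed distance function $d_s$ on the fixed tube $N^\e(\partial\Omega)$, so the first task is to assemble the properties I will use.

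First I would record the following facts, all valid on $N^\e(\partial\Omega)$: (a) $d_s \in C^l(N^\e(\partial\Omega))$, which is the standard regularity of the signed distance to a $C^l$ hypersurface (shrinking $\e$ if needed); (b) the eikonal identity $|\nabla d_s| \equiv 1$; (c) $\nabla d_s = \nu$ on $\partial\Omega$; and (d) $\nabla^2 d_s\, \nabla d_s = 0$, i.e.\ $\nabla d_s$ is constant along each normal line $t \mapsto \bar x + t\nu(\bar x)$. Property (d) is exactly Lemma~\ref{Weingarten} applied to $d_s^\e$, since $d_s^\e = d_s$ on $N^\e(\partial\Omega)$ by \eqref{depsilon}. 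From (a) and (c) the outward normal $\nu = \nabla d_s|_{\partial\Omega}$ lies in $C^{l-1}(\partial\Omega)$, so $\Phi(\bar x, r) = \bar x + r\nu(\bar x)$ is $C^{l-1}$; likewise $P = x - d_s \nabla d_s \in C^{l-1}$ and $d_s \in C^l \subset C^{l-1}$, so the candidate inverse $(P, d_s)$ is $C^{l-1}$. This disposes of the regularity part of the claim.

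Next I would check the two compositions. For $(P, d_s)\circ\Phi = \mathrm{id}$, fix $(\bar x, r)$ and set $x = \bar x + r\nu(\bar x)$; by (c)--(d) the gradient is constant and equal to $\nu(\bar x)$ along this line, so $g(t) := d_s(\bar x + t\nu(\bar x))$ satisfies $g'(t) = \nu(\bar x)\cdot\nu(\bar x) = 1$ with $g(0)=0$, giving $d_s(x) = r$ and hence $P(x) = x - r\nabla d_s(x) = x - r\nu(\bar x) = \bar x$. For $\Phi\circ(P, d_s) = \mathrm{id}$, fix $x \in N^\e(\partial\Omega)$; applying (d) along $t \mapsto x - t\nabla d_s(x)$ gives $d_s(x - t\nabla d_s(x)) = d_s(x) - t$, so at $t = d_s(x)$ the point $P(x) = x - d_s(x)\nabla d_s(x)$ lies on $\partial\Omega$ and, again by (c)--(d), $\nu(P(x)) = \nabla d_s(P(x)) = \nabla d_s(x)$; therefore $\Phi(P(x), d_s(x)) = P(x) + d_s(x)\nabla d_s(x) = x$. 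These identities show $\Phi$ is a bijection onto $N^\e(\partial\Omega)$ with inverse $(P, d_s)$, and together with the regularity above they make $\Phi$ a $C^{l-1}$ diffeomorphism.

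The main obstacle is not the verification, which is a short computation, but securing the input facts (a) and (d): the $C^l$ regularity of $d_s$ near the boundary and the single-valuedness of the nearest-point projection on $N^\e(\partial\Omega)$, both of which force $\e$ to be small enough that the normal lines issuing from $\partial\Omega$ do not cross. I would handle this either by citing the classical tubular-neighborhood and signed-distance regularity theory (e.g.\ Gilbarg--Trudinger, Appendix~14.6) or, staying within the paper, by invoking the already-fixed fact that $d_s \in C^3(\overline{N^\e(\partial\Omega)})$ together with Lemma~\ref{Weingarten}, shrinking $\e$ if necessary so that $D\Phi$ remains nonsingular.
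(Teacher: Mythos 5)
Your proof is correct, but it takes a genuinely different route from the paper, because the paper gives no proof at all: it defers entirely to Proposition 3.6 of \cite{kimura2008geometry} and to \cite{gilbarg2015elliptic}. Your approach --- writing down the candidate inverse $(P,d_s)$ from \eqref{xbar:r}, verifying the two compositions by hand, and reading off the $C^{l-1}$ regularity of $\Phi$ and of $(P,d_s)$ separately --- makes the proposition self-contained up to exactly one external input, namely the $C^l$ regularity of $d_s$ on a sufficiently thin tube (equivalently, the tubular neighbourhood theorem), which you correctly isolate and cite. This buys transparency: no inverse function theorem and no abstract bijectivity argument are needed, since a $C^{l-1}$ bijection with an explicit $C^{l-1}$ two-sided inverse is by definition a $C^{l-1}$ diffeomorphism; the cost is reproducing an argument the authors were content to outsource. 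One step you should make explicit: your item (d) silently identifies the Hessian identity $\nabla^2 d_s\,\nabla d_s=0$ with the statement that $\nabla d_s$ is constant along each normal line. The implication you actually use (Hessian identity $\Rightarrow$ constancy along $t\mapsto \bar x + t\nu(\bar x)$) needs a short flow argument: the integral curve of $\nabla d_s$ through $\bar x$ has derivative $\nabla d_s$ along itself, which by the Hessian identity is constant and equal to $\nu(\bar x)$, so the curve is the straight normal line and $\nabla d_s$ is constant on it. Without this, your computation $g'(t)=\nu(\bar x)\cdot\nu(\bar x)=1$ presupposes what it is proving. Alternatively, take line-constancy itself as the primitive fact --- which is exactly what the paper's own proof of Lemma~\ref{Weingarten} does --- and your compositions go through verbatim; likewise, your final remark about shrinking $\e$ so that $D\Phi$ is nonsingular is superfluous, since nonsingularity follows from the chain rule once both inverses are differentiable. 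These are ellipses to fill, not gaps, so the argument closes.
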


We refer to Proposition 3.6 in \cite{kimura2008geometry} and \cite{gilbarg2015elliptic} for the proof of Proposition \ref{Phi}.

We interpret $\bar x = P(x)$ as the orthogonal projection of $x\in N^\e(\partial\Omega)$ onto $\partial\Omega$. We note from Lemma \ref{Weingarten} and $|\nabla d_s| = 1$ that 
\begin{equation}\label{Wmap}
  \nabla P \nabla d_s
  = (I - \nabla d_s \otimes \nabla d_s  - d_s \nabla^2 d_s ) \nabla d_s
  = \nabla d_s - \nabla d_s - 0
  = 0 \qquad \text{on } N^{\varepsilon}(\partial\Omega).
\end{equation}


\subsection{The approximation of Problem~\ref{ProbP}}
\label{sec3:Pk}

We construct an approximation of Problem~\ref{ProbP} by replacing the reflection rule and the condition that $x_i \in \overline \Omega$ by a force field outside of $\Omega$ which pushes a particle back inside $\Omega$. This force field is given by $- k d\nabla d : \R^m \to \R^m$, where $k \in \N$ is a (large) parameter which dictates the strength of the force. Since particles may leave $\Omega$, we need to extend the domain of $F_i$ beyond $\overline \Omega^n$. By \eqref{1}, we may extend it to $(\R^m)^n$ as a continuous function. In the following, we take any such extension, and denote it simply by
\begin{equation}\label{conti}
F_i\in C([0,T]\times (\R^m)^n;\R^m)\quad(i=1,\ldots,n).
\end{equation}
\begin{Prob}\label{AP}
Let $k\in \mathbb{N}$ and $T>0$. For $i=1,\ldots,n$, let $F_i$ be as in \eqref{conti}, and let $x_i^{\circ}\in\Omega$, $v_i^{\circ}\in \R^m$. Find a solution $X^k(t)=\{x_i^k(t)\}_{i=1}^n$ to
\begin{equation} \label{AP:eq}
\left\{
\begin{aligned}
\ddot{x}_i^k(t) &= F_i(t, X^k(t)) - k(d\nabla d)(x_i^k(t))\qquad (t\in(0,T)),\\
x_i^k(0) &= x_i^{\circ}, \\
\frac{d x_i^k}{dt}(0) &= v_i^{\circ}
\end{aligned}
\right.
\end{equation}
for $i=1,\ldots,n$.
\end{Prob}

Figure \ref{test} illustrates how solutions to Problem \ref{AP} approximate the reflection in a more regular fashion.
\begin{figure}[htbp]
  \begin{center} 
    \includegraphics[width=140mm]{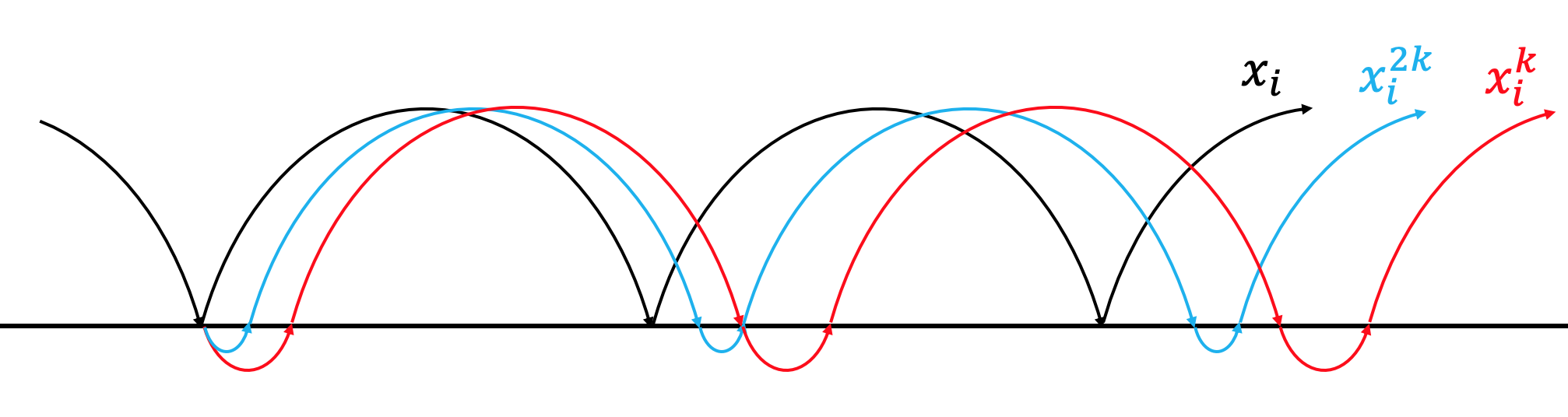}
  \end{center}
  \caption{Sketch of the trajectory of a solution $x_i$ to Problem \ref{ProbP} and two trajectories of solutions to the approximate Problem \ref{AP}.}
  \label{test}
\end{figure}

For a continuous solution $X^k$ to Problem \ref{AP}, we set
\begin{equation*} \label{Iki}
  I^{i,k} := \left\{ t\in [0,T]~;~ x_i^k(t) \notin \overline{\Omega}\right\}
  \qquad (i = 1,\ldots,n)
\end{equation*}
as the set of times at which particle $x_i^k$ is outside of $\overline \Omega$.
The following lemma provides the global existence of classical solutions to Problem~\ref{AP}. It also lists several bounds on such solutions.

\begin{Lem}\label{l:xk}
For all $k$ large enough, there exists a classical solution $X^k \in C^2([0,T]; \R^m)$ to Problem~\ref{AP}. Moreover, there exists a constant $C > 0$ such that for all $k$ large enough and for any classical solution $X^k$:
\begin{enumerate}[(i)] 
  \item \label{l:xk:d} $\| d(x_i^k) \|_{C([0,T])} \leq C / \sqrt k$ for $i = 1,\ldots,n$,
  \item \label{l:xk:xd} $\| \dot X^k \|_{C([0,T])} \leq C$,
  \item \label{l:xk:xddL1} $\| \ddot X^k \|_{L^1(0,T)} \leq C$, 
  \item \label{l:xk:kdndL1} $\| (d \nabla d)(x_i^k) \|_{L^1(0,T)} \leq C/k$ for $i = 1,\ldots,n$,
  \item \label{l:xk:xddCI} $\| \ddot x_i^k \|_{C([0,T] \setminus I^{i,k})} \leq C$ for $i = 1,\ldots,n$.
\end{enumerate}
\end{Lem}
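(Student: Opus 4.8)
The plan is to first obtain a global classical solution by a continuation argument driven by an energy estimate, and then to prove the bounds (i)--(v) in the order (i)\&(ii), (iv), (iii), (v), treating (iv) as the crux and deriving (iii) from it. Since $F_i$ is continuous by \eqref{conti} and $d\nabla d$ is Lipschitz by Proposition~\ref{d}, the right-hand side of \eqref{AP:eq} is continuous in $(t,X)$, so the Cauchy--Peano theorem gives a local $C^2$ solution. To continue it to $[0,T]$ I would introduce the energy
\[
  E^k(t) := \tfrac12\sum_{i=1}^n |\dot x_i^k(t)|^2 + \tfrac{k}{2}\sum_{i=1}^n d(x_i^k(t))^2 .
\]
Because $\tfrac12 d^2$ is $C^{1,1}$ with gradient $d\nabla d$, differentiating and inserting \eqref{AP:eq} makes the two spring terms cancel, leaving $\frac{d}{dt}E^k=\sum_i \dot x_i^k\cdot F_i(t,X^k)$. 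As long as the particles stay in the compact set $K:=\overline{N^{\varepsilon/2}(\Omega)}$, where $M:=\max_i\|F_i\|_{C([0,T]\times K^n)}<\infty$, Cauchy--Schwarz yields $\frac{d}{dt}E^k\le M\sqrt{2n}\,\sqrt{E^k}$, hence $\sqrt{E^k(t)}\le\sqrt{E^k(0)}+\tfrac12 M\sqrt{2n}\,t$. The decisive point is that $E^k(0)=\tfrac12\sum_i|v_i^\circ|^2$ is independent of $k$, since $x_i^\circ\in\Omega$ forces $d(x_i^\circ)=0$; thus $E^k\le C_0^2$ on $[0,T]$ with $C_0$ independent of $k$. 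Then $|\dot x_i^k|^2\le 2E^k$ gives (ii), and $\tfrac{k}{2}d(x_i^k)^2\le E^k$ gives (i) with $d(x_i^k)\le\sqrt2\,C_0/\sqrt k$. For $k$ large enough that $\sqrt2\,C_0/\sqrt k<\varepsilon/2$, (i) keeps every particle strictly inside $K^n$ with velocity bounded by (ii), so by the standard continuation principle the solution extends to all of $[0,T]$.

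The main obstacle is (iv), because the bound from (i) only gives $\int_0^T|(d\nabla d)(x_i^k)|=\int_0^T d(x_i^k)\le CT/\sqrt k$ (using $|(d\nabla d)(x)|=d(x)$, valid since $|\nabla d|=1$ off $\overline\Omega$), which loses a factor $\sqrt k$. To recover the sharp rate I would test \eqref{AP:eq} against $\nabla d_s^{\varepsilon}(x_i^k)$, where $d_s^\varepsilon\in C^3(\R^m)$ is the extension from \eqref{depsilon}. The algebraic key is the identity
\[
  (d\nabla d)(x)\cdot\nabla d_s^{\varepsilon}(x)=d(x)\qquad(x\in K),
\]
which holds because on $\overline\Omega$ both sides vanish (by \eqref{df}), while off $\overline\Omega$ the trajectory lies in $N^{\varepsilon}(\partial\Omega)$ by (i), where $d=d_s=d_s^\varepsilon$, $\nabla d=\nabla d_s^\varepsilon$ and $|\nabla d_s^\varepsilon|=1$. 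Using $k(d\nabla d)(x_i^k)=F_i-\ddot x_i^k$ this turns into the pointwise identity $k\,d(x_i^k)=(F_i-\ddot x_i^k)\cdot\nabla d_s^{\varepsilon}(x_i^k)$, and integrating over $[0,T]$ while integrating the acceleration term by parts gives
\[
  k\int_0^T d(x_i^k)\,dt=\int_0^T F_i\cdot\nabla d_s^{\varepsilon}(x_i^k)\,dt-\Big[\dot x_i^k\cdot\nabla d_s^{\varepsilon}(x_i^k)\Big]_0^T+\int_0^T \dot x_i^k\cdot\nabla^2 d_s^{\varepsilon}(x_i^k)\,\dot x_i^k\,dt .
\]
Each term on the right is bounded independently of $k$: the first by $MT\|\nabla d_s^\varepsilon\|_{C(K)}$, the boundary term by $2C\|\nabla d_s^\varepsilon\|_{C(K)}$ via (ii), and the last by $C^2 T\|\nabla^2 d_s^\varepsilon\|_{C(K)}$ via (ii), all finite since $d_s^\varepsilon\in C^3$ and $K$ is compact. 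Dividing by $k$ yields (iv). (Note that the sharper identity $\nabla^2 d_s^\varepsilon\nabla d_s^\varepsilon=0$ of Lemma~\ref{Weingarten} is not needed here, as the crude bound on the Hessian term already suffices.)

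The remaining estimates are then immediate. For (iii), \eqref{AP:eq} gives $\int_0^T|\ddot x_i^k|\le\int_0^T|F_i|+k\int_0^T|(d\nabla d)(x_i^k)|\le MT+C$ by (iv), and since $\|\ddot X^k\|_{L^1(0,T)}=\int_0^T\max_i|\ddot x_i^k|\le\sum_i\int_0^T|\ddot x_i^k|$ this controls the full norm. For (v), on $[0,T]\setminus I^{i,k}$ the particle lies in $\overline\Omega$, so $(d\nabla d)(x_i^k)=0$ by \eqref{df} and \eqref{AP:eq} reduces to $\ddot x_i^k=F_i(t,X^k)$, whence $|\ddot x_i^k|\le M$. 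Taking $C$ to be the maximum of the finitely many $k$-independent constants produced above furnishes a single uniform constant, completing the proof.
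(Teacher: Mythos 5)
Your proposal is correct and follows essentially the same route as the paper's proof: local existence by Peano, global continuation via the energy estimate in which the spring term $\tfrac k2 d(x_i^k)^2$ cancels (yielding (i) and (ii) uniformly in $k$ since $d(x_i^\circ)=0$), then the key estimate (iv) obtained by pairing $k(d\nabla d)(x_i^k)=F_i-\ddot x_i^k$ with $\nabla d_s^{\varepsilon}(x_i^k)$ and integrating by parts, with (iii) deduced from (iv) and (v) immediate from the ODE on $\overline\Omega$. The only cosmetic difference is your Gronwall step (a $\sqrt{E}$-comparison giving linear growth, versus the paper's Young inequality plus Gronwall giving $Ce^T$), which changes nothing of substance.
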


\begin{proof}
Setting $v_i^k(t):=\dot{x}_i^k(t)$ for all $i\in \{1,\ldots,n\}$, we rewrite \eqref{AP:eq} as
\begin{equation}\label{change}
\left\{
\begin{aligned}
\frac{d}{dt}\begin{bmatrix} v_i^k(t) \\ x_i^k(t) \end{bmatrix}  &= \begin{bmatrix} F_i(t, X^k(t)) - k(d\nabla d)(x_i^k(t)) \\ v_i^k(t) \end{bmatrix},\\
\begin{bmatrix} v_i^k(0) \\ x_i^k(0) \end{bmatrix} &= \begin{bmatrix} v_i^{\circ} \\ x_i^{\circ} \end{bmatrix}.
\end{aligned}
\right.
\end{equation}
From \eqref{1} and Proposition~\ref{df} it follows that the right-hand side is continuous in $t$ and $X$ as long as  $X \in (\overline{N^{\varepsilon}(\Omega)})^n$. Hence, there exists some $T_0>0$ such that \eqref{change} has a classical solution $(x_i^k, v_i^k) \in C^1([0,T_0]; (\R^{m})^2)$ with $x_i^k(t)\in N^{\varepsilon}(\Omega)$ for all $t\in [0, T_0)$ and all $i\in \{1,\ldots,n\}$. Since $\dot{x}_i^k = v_i^k \in C^1([0,T_0])$, $X^k \in C^2([0,T_0]; (\R^m)^n)$, and thus Problem~\ref{AP} has a local classical solution. 

To show that any local solution $X^k$ can be extended to a global solution, we need to show that the right-hand side of the ODE in \eqref{change} is continuous along the evolution, i.e., $x_i^k(t)\in N^{\varepsilon}(\Omega)$ for all $t\in [0, T]$ and all $i\in \{1,\ldots,n\}$. Suppose that this is not the case. Let $T_0 > 0$ be the first time at which $d(x_i^k(T_0)) = \e$ for some $i \in \{1,\ldots,n\}$. Then, for $t\in (0,T_0)$ we calculate 
\[
\begin{aligned}
\frac{1}{2}\frac{d}{dt}|\dot{x}_i^k(t)|^2 &=\dot{x}_i^k(t)\cdot \ddot{x}_i^k(t)\\
&=\dot{x}_i^k(t)\cdot F_i(t, X^k(t)) - \dot{x}_i^k(t) k(d\nabla d)(x_i^k(t))\\
&=\dot{x}_i^k(t)\cdot F_i(t, X^k(t)) -\frac{1}{2} k \frac{d}{dt}d(x_i^k(t))^2.
\end{aligned}
\]
Integration over $(0,T_0)$ yields
\begin{equation}\label{l:pf:1}
  \begin{aligned} 
\frac{1}{2} |\dot{x}_i^k(t)|^2+\frac{k}{2}d(x_i^k(t))^2 
&= \frac{1}{2}|v_i^{\circ}|^2+ \frac{k}{2}d(x_i^\circ)^2 +\int_0^t \dot{x}_i^k(s)\cdot F_i(s, X^k(s)) ds\\ 
&=\frac{1}{2}|v_i^{\circ}|^2 + \int_0^t\dot{x}_i^k(s)\cdot F_i(s, X^k(s)) ds\\ 
&\le \frac{1}{2}|v_i^{\circ}|^2 + \int_0^t\frac{|F_i(s, X^k(s))|^2}{2}ds + \int_0^t\frac{|\dot{x}_i^k(s)|^2}{2}ds \\
&\le \underbrace{ \frac{1}{2}|v_i^{\circ}|^2 + \frac T2 \| F_i \|_\infty^2 }_{= C} + \int_0^t\frac{|\dot{x}_i^k(s)|^2}{2}ds.
\end{aligned}  
\end{equation}
By removing the non-negative contribution from the second term in the left-hand side, an application of Gronwall's Lemma yields
\[
\frac{1}{2}|\dot{x}_i^k(t)|^2 
\le C e^t 
\le C e^T
\]
for all $t \in (0,T_0)$. Inserting this uniform bound in \eqref{l:pf:1}, we obtain
\[
\frac{k}{2} d(x_i^k(t))^2
\le C,
\]
where here and henceforth we allow the constant $C$ to change from line to line. Hence,
\[
d(x_i^k(t)) \le \frac{C}{\sqrt k}
\]
for all $t \in (0,T_0)$ and all $i\in \{1,\ldots,n\}$. In particular, if $k > 4{C^2}/{\varepsilon^2} $, then $d(x_i^k(t)) < \e/2$ for all $t \in (0, T_0)$. By the continuity of $x_i^k$, we then obtain $d(x_i^k(T_0)) \leq \varepsilon/2$, which contradicts with our assumption on $T_0$. Hence, the solution $X^k$ extends to $T$.

Next we prove Properties \eqref{l:xk:d}--\eqref{l:xk:xddCI}. Properties \eqref{l:xk:d} and \eqref{l:xk:xd} are already established in the argument above. Property \eqref{l:xk:xddCI} follows from the observation from \eqref{AP:eq} that 
\[
  |\ddot x_i^k(t)| = |F_i(t ,X^k(t))| \leq \|F_i\|_{\infty}
\]  
for all $t \notin I^{i,k}$ and all $i\in\{1,\ldots,n\}$.
  
Finally, we prove Properties \eqref{l:xk:xddL1} and \eqref{l:xk:kdndL1}. First, for all $i\in\{1,\ldots,n\}$, we estimate
\begin{equation}\label{BVbdd:0}
\begin{aligned}
\int_0^T|\ddot{x}_i^k(t)|dt& =\int_0^T |F_i(t, X^k(t)) - k(d\nabla d)(x_i^k(t))|dt\\
& \le \int_0^T |F_i(t, X^k(t))|dt + \int_{0}^T |k(d\nabla d)(x_i^k(t))|dt.
\end{aligned}
\end{equation}
To continue with estimating the second term, note that for $x\in N^{\varepsilon}(\Omega)$ at least one of the following holds: 
\begin{enumerate}
  \item $x \in \overline \Omega$ and thus $(d\nabla d)(x) = 0$, or
  \item $x \in N^\e (\partial \Omega) \setminus \overline \Omega$ and thus $(d\nabla d)(x) = d(x) \nabla d(x) = d(x) \nabla d_s^\e(x)$.
\end{enumerate}
In both cases, $|(d\nabla d)(x)|=(d\nabla d)(x)\cdot \nabla d_s^{\varepsilon}(x)$, and thus
\begin{equation}\label{BVbdd}
\begin{aligned}
\int_0^T \left| k (d \nabla d)(x_i^k(t))\right|dt
&=\int_0^T k (d \nabla d)(x_i^k(t)) \cdot \nabla d_s^{\varepsilon}(x_i^k(t))dt\\
&=\int_0^T (F_i(t, X^k(t)) - \ddot{x}_i^k(t)) \cdot \nabla d_s^{\varepsilon}(x_i^k(t))dt\\
&\le C T \|F_i\|_{\infty} - \big[\dot{x}_i^k(t)\cdot \nabla d_s^{\varepsilon}(x_i^k(t)) \big]_0^T+\int_0^T \dot{x}_i^k(t)\cdot\left(\nabla^2 d_s^{\varepsilon}(x_i^k(t))\dot{x}_i^k(t)\right)dt\\
&\le C T \|F_i\|_{\infty} + |\dot{x}_i^k(0)|+|\dot{x}_i^k(T)|+\|\nabla^2 d_s^{\varepsilon}\|_{\infty}\int_0^T |\dot{x}_i^k(t)|^2 dt,
\end{aligned}
\end{equation}
which is bounded uniformly in $k$ by Property \eqref{l:xk:xd}. This proves Property \eqref{l:xk:kdndL1}. Using Property \eqref{l:xk:kdndL1} and applying again the uniform bound on $F_i$, we obtain Propery \eqref{l:xk:xddL1} from \eqref{BVbdd:0}.
\end{proof}


\subsection{Proof of existence of solutions}
\label{sec3:pf}

\paragraph{Construction of $X$ as in Definition \ref{Def}(i)} First we construct a candidate solution $X$. Given $T, x_i^\circ, v_i^\circ$ as in Problem \ref{ProbP}, let $X^k$ be a corresponding solution to Problem~\ref{AP} for each $k$ large enough. The following Lemma specifies compactness properties for the sequence $\{X^k\}$, and shows that any limit point satisfies Definition \ref{Def}(i). 
\begin{Lem}\label{Lem1}
Let $\{X^k\}$ be as above. Then, there exists an $X$ which satisfies Definition \ref{Def}(i) such that
 \begin{equation}\label{converge}
  \left\{
  \begin{aligned}
    X^k\to X &\qquad\text{in}~C^0([0,T];(\R^m)^n),\\
    \dot{X}^k\to \dot{X} &\qquad\text{in}~L^p(0,T;(\R^m)^n)\qquad(p\in [1,\infty))
    \end{aligned}
  \right.
  \end{equation}
  as $k\to\infty$.
\end{Lem}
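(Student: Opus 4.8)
The plan is to extract a convergent subsequence from $\{X^k\}$ using the uniform bounds in Lemma~\ref{l:xk}, and then verify that the limit lies in the function spaces required by Definition~\ref{Def}(i). First I would observe that Lemma~\ref{l:xk}\eqref{l:xk:xd} gives a uniform bound $\|\dot X^k\|_{C([0,T])}\le C$, which together with the fixed initial data $X^k(0)=X^\circ$ yields a uniform bound on $\|X^k\|_{C([0,T])}$; hence $\{X^k\}$ is uniformly bounded and, being uniformly Lipschitz in $t$ (again by the velocity bound), equicontinuous. By the Arzel\`a--Ascoli theorem there is a subsequence (not relabelled) and a limit $X\in C^0([0,T];(\R^m)^n)$ with $X^k\to X$ uniformly. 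The uniform Lipschitz bound passes to the limit, so $X$ is Lipschitz, i.e. $X\in W^{1,\infty}(0,T;(\R^m)^n)$, and its weak derivative $\dot X$ satisfies $\|\dot X\|_{L^\infty}\le C$.

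Next I would identify the limit of the velocities. From Lemma~\ref{l:xk}\eqref{l:xk:xd} and \eqref{l:xk:xddL1}, each component $\dot x_i^k$ is uniformly bounded in $L^\infty(0,T)$ and uniformly bounded in $BV(0,T)$ (since $\|\dot x_i^k\|_{BV}=\|\dot x_i^k\|_{L^1}+\|\ddot x_i^k\|_{L^1}$ is controlled). By Helly's selection theorem (the compact embedding $BV(0,T)\hookrightarrow\hookrightarrow L^1(0,T)$), passing to a further subsequence gives $\dot X^k\to W$ strongly in $L^1(0,T;(\R^m)^n)$ for some $W\in BV$. A standard argument identifies $W=\dot X$: for any $\psi\in C_0^\infty(0,T)$ one has $\int \dot x_i^k\,\psi=-\int x_i^k\,\dot\psi\to-\int x_i\,\dot\psi=\int\dot x_i\,\psi$, using the uniform convergence of $X^k$, so the $L^1$-limit $W$ coincides with the distributional derivative $\dot X$. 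Lower semicontinuity of the $BV$-norm then gives $\dot X\in BV(0,T;(\R^m)^n)$. Upgrading the convergence from $L^1$ to every $L^p$, $p\in[1,\infty)$, follows by interpolation: $\|\dot X^k-\dot X\|_{L^p}\le\|\dot X^k-\dot X\|_{L^1}^{1/p}\,\|\dot X^k-\dot X\|_{L^\infty}^{1-1/p}$, and the $L^\infty$ factor is uniformly bounded while the $L^1$ factor vanishes.

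It then remains to check the pointwise constraints in Definition~\ref{Def}(i). The confinement $x_i(t)\in\overline\Omega$ follows from Lemma~\ref{l:xk}\eqref{l:xk:d}: since $d(x_i^k(t))\le C/\sqrt k\to 0$ uniformly and $d$ is continuous, the uniform limit satisfies $d(x_i(t))=0$, i.e. $x_i(t)\in\overline\Omega$ for all $t$. The initial conditions $x_i(0)=x_i^\circ$ pass to the limit by uniform convergence. For the velocity initial condition $\tfrac{d^+x_i}{dt}(0)=v_i^\circ$, I would use Proposition~\ref{Proplim} to rewrite the left-hand side as $\dot x_i(0+)$, and argue that the boundary layer near $t=0$ is negligible: since $x_i^\circ\in\Omega$ is an interior point, for small $t$ the particle $x_i^k$ stays inside $\Omega$ where the penalty force vanishes, so on a fixed short interval $[0,\delta]$ the $\ddot x_i^k=F_i$ are uniformly bounded and the velocities converge without jumps, forcing $\dot x_i(0+)=\lim_k\dot x_i^k(0)=v_i^\circ$.

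The main obstacle I anticipate is the identification of the $BV$-limit together with the initial-velocity condition. The $L^1$-compactness from the uniform $BV$-bound is standard, but care is needed because the limiting $\dot X$ is only a $BV$ function with possible jumps, so one must work with the good right-continuous representative \eqref{rllims:BV} and justify that the trace $\dot x_i(0+)$ really equals $v_i^\circ$ rather than some reflected value; this is where the hypothesis $x_i^\circ\in\Omega$ (strictly interior) is essential, since it guarantees a collision-free initial layer and hence no jump of the velocity at $t=0$.
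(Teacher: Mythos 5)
Your proof is correct and follows essentially the same route as the paper's: Arzel\`a--Ascoli for the positions, $BV$-compactness in $L^1$ (your Helly argument is the paper's citation of Evans--Gariepy) with identification of the limit as the distributional derivative of $x_i$, and the same $L^1$--$L^\infty$ interpolation to upgrade the velocity convergence to every $L^p$, $p\in[1,\infty)$. Your explicit verification of the initial conditions $x_i(0)=x_i^\circ$ and $\frac{d^+x_i}{dt}(0)=v_i^\circ$ via a collision-free initial layer (using $x_i^\circ\in\Omega$ and the uniform speed bound) is a point the paper's proof leaves implicit, but it does not change the approach.
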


\begin{proof}
It is not restrictive to take $i \in \{1,\ldots,n\}$ arbitrarily, and prove the statements of Lemma \ref{Lem1} for $x_i^k$. In the following, we will take several subsequences without relabeling the index.

We start with proving the first statement in \eqref{converge}. By Lemma~\ref{l:xk}\eqref{l:xk:d},\eqref{l:xk:xd}, for $k$ large enough $\{x^k\}$ is bounded and equicontinuous such that Ascoli-Arzel{\`a}'s Theorem applies. Hence, there exists a uniformly convergent subsequence of $\{x_i^k\}$ with limit point $x_i \in C([0,T], \R^m)$. 

Next we show the auxiliary statement that
\begin{equation} \label{pf:dxk:Linf}
  \dot{x}_i^k \stackrel*\rightharpoonup \dot{x}_i \qquad \text{in}~L^{\infty}(0,T;\R^m)
\end{equation}
along a further subsequence as $k \to \infty$.
From the uniform bound on $\| \dot{x}_i^k \|_\infty$ (see Lemma \ref{l:xk}\eqref{l:xk:xd}), there exists a further subsequence of $\{x_i^k\}$ along which $\dot{x}_i^k \stackrel*\rightharpoonup y$ in $L^{\infty}$ as $k \to \infty$ for some $y\in L^{\infty}(0,T;\R^m)$. To show that $y=\dot{x}_i$, we characterize $y$ as the weak derivative of $x_i$. For any test function $\psi \in C_0^\infty((0,T); \R^m)$, there holds
\[
\begin{aligned}
\int_0^Ty(t)\cdot \psi(t)dt &= \lim_{k \to\infty}\int_0^T \dot{x}_i^{k}(t)\cdot\psi(t)dt=-\lim_{k \to\infty}\int_0^T x_i^{k}(t)\cdot\psi '(t)dt\\
&=-\int_0^T x_i(t)\cdot\psi '(t)dt.
\end{aligned}
\]
Next we show that $x_i$ satisfies Definition \ref{Def}(i). From the two convergence results established above, we observe that $x_i \in W^{1,\infty}(0,T; \R^m)$. From Lemma \ref{l:xk}\eqref{l:xk:d} it follows that $x_i(t) \in \overline \Omega$ for all $t$. Hence, it is left to show that $\dot{x}_i \in BV(0,T; \R^m)$. Since $\ddot{x}_i^k$ is bounded in $L^1(0,T; \R^m)$ (see Lemma \ref{l:xk}\eqref{l:xk:xddL1}), it follows by Theorem 5.5 form \cite{evans2015measure} that $\dot x_i^k$ converges strongly along a further subsequence in $L^1(0,T; \R^m)$ to a limit $z \in BV(0,T; \R^m)$. By \eqref{pf:dxk:Linf} and the uniqueness of limits, we obtain $\dot{x}_i = z \in BV(0,T; \R^m)$. 

Finally, we use again the strong convergence of $\dot{x}_i^k$ in $L^1$ to prove the second statement in \eqref{converge}. Since $\dot{x}_i^k, \dot{x}_i \in L^\infty$ (see \eqref{pf:dxk:Linf}), 
\[ \|\dot{x}_i^k-\dot{x}_i\|_{L^p}^p
= \int_0^T |\dot{x}_i^k-\dot{x}_i|^{p-1} |\dot{x}_i^k-\dot{x}_i| dt
\le ( \| \dot{x}_i^k \|_\infty + \| \dot{x}_i \|_\infty )^{p-1} \|\dot{x}_i^k-\dot{x}_i\|_{L^1}
\le C \|\dot{x}_i^k-\dot{x}_i\|_{L^1} \to 0. \]
\end{proof}

\paragraph{$X$ satisfies Definition \ref{Def}(ii)} Let $X$ be given by Lemma~\ref{Lem1}, and take $i \in \{1, \ldots, n\}$ arbitrary. In the following we prove that $x_i$ satisfies Definition~\ref{Def}(ii) for some $\rho_i \in \mathcal{M}_+((0,T))$ with $\supp \rho_i \subset I_{\partial \Omega}^i$ (defined in Definition~\ref{Def}(ii)). With this aim, let $\psi\in C_0^{\infty}(0,T;\R^m)$ be a test function, and note that $x_i^k$ satisfies
\[
  \int_0^T (-\ddot{x}_i^k + F_i(t, X^k(t)))\cdot \psi(t) dt
  =  \int_0^T k(d\nabla d)(x_i^k(t))\cdot\psi(t)dt 
  =: A_i^k.
\]
With Lemma \ref{Lem1} and the continuity of $F_i$ we pass to the limit $k \to \infty$ in the left-hand side. This yields
\[
\begin{aligned}
\lim_{k\to\infty} \int_0^T ( - \ddot{x}_i^k + F_i(t, X^k(t)))\cdot \psi(t) dt &=\lim_{k\to\infty}\left(\int_0^T(\dot{x}_i^k(t)\cdot\dot{\psi}(t) + F_i(t, X^k(t))\cdot\psi(t))dt\right)\\
&=\int_0^T(\dot{x}_i(t)\cdot\dot{\psi}(t) + F_i(t, X(t))\cdot\psi(t))dt.
\end{aligned}
\]
Hence, it is left to show that 
\begin{equation} \label{Ak:purpose}
  \lim_{k\to\infty} A_i^k = \int_0^T \nu(x_i(t)) \cdot \psi(t) d\rho_i(t)
\end{equation}
for some $\rho_i \in \mathcal{M}_+((0,T))$ with $\supp \rho_i \subset I_{\partial \Omega}^i$.

To prove \eqref{Ak:purpose}, we assume that $k$ is large enough. Then, since $d(x_i^k(t)) < \e$ for all $t \in [0,T]$, either $d(x_i^k(t)) = 0$ or $\nabla d(x_i^k(t)) = \nabla d_s^\e(x_i^k(t))$.
Hence,
\[
A_i^k =\int_0^T \underbrace{ k d(x_i^k(t)) }_{=: \rho_i^k(t)} \underbrace{ \nabla d_s^{\varepsilon}(x_i^k(t))\cdot\psi(t) }_{=: f_i^k(t)} dt.
\]
Note from \eqref{converge} that 
\begin{equation} \label{fk:conv}
  f_i^k \to \nabla d_s^{\varepsilon}(x_i)\cdot\psi \quad \text{in } C([0,T])
\end{equation} 
as $k \to \infty$ along a subsequence.
Since $\rho_i^k(t) \geq 0$, we may interpret $\rho_i^k \in \mathcal{M}_+((0,T))$. Then, by Lemma \ref{l:xk}\eqref{l:xk:kdndL1},
\[
\begin{aligned}
\int_0^T  \rho_i^k(t) dt &=\int_0^T k(d\nabla d)(x_i^k(t))\cdot \nabla d_s^\e (x_i^k(t)) dt
\le C \| \nabla d_s^\e \|_\infty < \infty.
\end{aligned}
\] 
Since this bound is uniform in $k$, there exists a $\rho_i \in \mathcal{M}_+((0,T))$ such that $\rho_i^k \stackrel*\rightharpoonup \rho_i$ in $\mathcal{M}_+((0,T))$ along a subsequence as $k \to \infty$. Together with \eqref{fk:conv}, we then obtain
\[
  \lim_{k\to\infty} A_i^k = \int_0^T \nabla d_s^\e(x_i(t)) \cdot \psi(t) d\rho_i(t).
\]

To conclude \eqref{Ak:purpose}, it is left to show that 
\begin{equation} \label{supprho}
  \supp \rho_i \subset I_{\partial\Omega}^i,
\end{equation}
in which case we may replace $\nabla d_s^\e(x_i(t))$ by $\nu(x_i(t))$.
We prove \eqref{supprho} by contradiction. Suppose there exists a $t_0\in \supp \rho_i$ with $t_0 \notin I_{\partial \Omega}^i$. Since $(I_{\partial \Omega}^i)^c$ is open, there exists a $\delta>0$ such that $\overline{B_{\delta}(t_0)} \subset (I_{\partial \Omega}^i)^c$ with $\sup_{t\in B_{\delta}(t_0)} d_s(x_i(t)) < 0$. By using \eqref{converge}, for all $k$ large enough, we have $\sup_{t\in B_{\delta}(t_0)} d_s(x_i^k(t)) < 0$ for all $t\in B_{\delta}(t_0)$. Hence, $\rho_i^k(t) = k d(x_i^k(t)) = 0$ for all $t\in B_{\delta}(t_0)$, and thus
\[ 
\supp \rho_i^k \cap B_{\delta}(t_0) = \emptyset
\] 
for all $k$ large enough. Passing to $k\to\infty$, we get  $\supp \rho_i \cap B_{\delta}(t_0) = \emptyset$, which contradicts with $t_0 \in \supp \rho_i$. This concludes the proof for the statement that $X$ satisfies Definition \ref{Def}(ii).

\paragraph{$X$ satisfies Definition \ref{Def}(iii)} 
For arbitrary $i \in \{1,\ldots,n\}$ and $t_0 \in I_{\partial \Omega}^i$ (defined in Definition \ref{Def}(ii)), we need to show that $x_i$ satisfies the reflection rule \eqref{reflection} at $t = t_0$. We recall from the proof of Proposition \ref{speed} that the reflection rule induces a jump discontinuity only for the normal component of the velocity, and that the tangential component remains continuous. To prove that $\dot x_i$ has these properties, we employ the change of variable introduce in Proposition \ref{Phi} with related functions $\Phi$, $P$ and $d_s$. Changing variables requires $x_i(t) \in N^\e(\partial \Omega)$. Since $\| \dot x_i \|_\infty < \infty$ and $x_i(t_0) \in \partial \Omega$, there exists a $\delta > 0$ such that $x_i(t) \in N^\e(\partial \Omega)$ for all $t \in B(t_0, \delta)$. By the uniform convergence of $x_i^k$ to $x_i$ (see \eqref{converge}), we may take $\delta$ smaller if needed such that for all $k$ large enough $x_i^k(t) \in N^\e(\partial \Omega)$ for all $t \in B(t_0, \delta)$. 

Next we apply the change of variables on $x_i(t)$ and $x_i^k(t)$ for any $t \in B(t_0, \delta)$. We set
\begin{align*}
  \bar{x}_i(t) \, &:= P(x_i(t)) & r_i(t) \, &:= d_s(x_i(t)), \\
  \bar{x}_i^k(t) \, &:= P(x_i^k(t)) & r_i^k(t) \, &:= d_s(x_i^k(t)).
\end{align*}
Note that
\[
  x_i(t) = \Phi( \bar{x}_i(t), r_i(t) ) = \bar{x}_i(t) + r_i(t) \nu (\bar{x}_i(t))
\]
and
\[
\dot{\bar{x}}_i(t)= \nabla P(x_i(t)) \dot{x}_i(t), \quad
\dot r_i(t) = \nabla d_s(x_i(t)) \cdot \dot{x}_i(t)
\qquad 
\text{for a.e.~} t\in B(t_0, \delta),
\]
and that similar expressions hold for $x_i^k(t)$.
Since $x_i \in C^0([0,T];\R^m)$ and $\dot{x}_i \in BV(0,T;\R^m)$, we have $\dot{\bar{x}}_i \in BV(B(t_0, \delta);\R^m)$ and $\dot{r}_i \in BV(B(t_0, \delta))$.

Since $\dot{\bar{x}}_i(t)$ is tangent to $\Omega$ at $\bar{x}_i(t)$, we expect that its second derivative has no jump discontinuity, and thus that $\dot{\bar{x}}_i$ is more regular than a typical BV function. We prove this by approximation from the approximate Problem \ref{AP}. We obtain from the convergence of $x_i^k$ to $x_i$ in \eqref{converge} and the regularity of $P$ that $\bar{x}_i^k \to \bar{x}_i$ in $C^0( [0,T]; \R^m)$ and $\dot{\bar{x}}_i^k \to \dot{\bar{x}}_i$ in $L^p(0,T; \R^m)$ as $k \to \infty$. Using the ODE \eqref{AP:eq} for $x_i^k$ and \eqref{Wmap}, we compute
\[
\begin{aligned}
\ddot{\bar{x}}_i^k(t)
&=\nabla^2 P(x_i^k(t)) \dot{x}_i^k(t) \dot{x}_i^k(t) + \nabla P(x_i^k(t)) \ddot{x}_i^k\\
&= \nabla^2 P(x_i^k(t)) \dot{x}_i^k(t) \dot{x}_i^k(t) + \nabla P(x_i^k(t)) \big(F_i(t,X^k(t))  - k(d\nabla d)(x_i^k(t)) \big)\\
&=  \nabla^2 P(x_i^k(t)) \dot{x}_i^k(t) \dot{x}_i^k(t) + \nabla P(x_i^k(t)) F_i(t,X^k(t)).
\end{aligned}
\]
Since the right-hand side is bounded in $B(t_0, \delta)$, we obtain that $\ddot{\bar{x}}_i^k \rightharpoonup y$ weakly-* in $L^{\infty}(B(t_0, \delta);\R^m)$ along a subsequence as $k \to \infty$. We characterize $y$ as the weak derivative of $\dot{\bar x}_i$; for any test function $\phi(t)\in C^{\infty}_0(B(t_0, \delta))$, we have
\[
\begin{aligned}
\int_{B(t_0, \delta)} y \cdot \phi(t) dt 
&= \lim_{k\to\infty} \int_{B(t_0, \delta)} \ddot{\bar{x}}_i^k(t)\cdot\phi(t)dt\\
&=-\lim_{k\to\infty} \int_{B(t_0, \delta)} \dot{\bar{x}}_i^k(t)\cdot\phi '(t)dt\\
&=-\int_{B(t_0, \delta)} \dot{\bar{x}}_i(t)\cdot\phi '(t) dt.
\end{aligned}
\]
Hence, $\ddot{\bar{x}}_i = y \in L^\infty(B(t_0, \delta))$, and thus $\dot{\bar{x}}_i(t_0+)=\dot{\bar{x}}_i(t_0-)$. 

Next we use this result to narrow down the reflection rule \eqref{reflection}. Recalling that $r_i(t_0) = 0$, we obtain
\[
\begin{aligned}
\frac{d^+}{dt}x_i(t_0) - \frac{d^-}{dt}x_i(t_0)
&=\frac{d^+}{dt}\left[\bar{x}_i + r_i \nabla d_s(\bar x_i)\right](t_0) - \frac{d^-}{dt}\left[\bar{x}_i + r_i \nabla d_s(\bar x_i)\right](t_0)\\
&= \left( \frac{d^+}{dt}r_i(t_0) - \frac{d^-}{dt}r_i(t_0) \right) \nabla d_s(\bar x_i(t_0)).
\end{aligned}
\]
Since $x_i(t_0) \in \partial \Omega$, we have $\nabla d_s(\bar x_i(t_0)) = \nabla d_s(x_i(t_0)) = \nu(x_i(t_0))$, and thus the reflection rule \eqref{reflection} holds if 
\begin{equation} \label{r:equal}
  \dot r_i(t_0+) = -\dot r_i(t_0-).
\end{equation}

The remainder of the proof of Theorem~\ref{exTh} focusses on proving \eqref{r:equal}. We first prove \eqref{r:equal} under the additional assumption that 
\[
  \alpha := \dot{r}_i(t_0-) > 0.
\] 
The following proof is inspired by Figure \ref{test}; we first give an overview. First, we show that the approximate solution $r_i^k:=d_s(x_i^k)$ leaves $\Omega$ at some time $a^k$ which is approximately equal to $t_0$, i.e.
\begin{equation} \label{alam1}
  a^k \to t_0 \qquad \text{as } k \to \infty,
\end{equation}
and that the speed of impact is approximately equal to that of $r_i$, i.e.
\begin{equation} \label{alam2}
  \dot{r}_i^k(a^k) \to \dot r_i(t_0-) \qquad \text{as } k \to \infty.
\end{equation}
Then, we show that $x_i^k \notin \overline \Omega$ on a small interval $(a^k, b^k)$, i.e.
\begin{equation} \label{alam3}
  b^k - a^k \to 0 \qquad \text{as } k \to \infty,
\end{equation}
and that $x_i^k$ enters $\Omega$ at time 
\[
  b^k := \inf \{t > a_k \ ; \ r_i^k(t) \leq 0 \}. 
\] 
with normal speed approximately equal to the negative of the speed of impact, i.e.
\begin{equation} \label{alam4}
  \dot{r}_i^k(a^k) + \dot{r}_i^k(b^k) \to 0 \qquad \text{as } k \to \infty.
\end{equation}
To conclude \eqref{r:equal}, it is then left to show that 
\begin{equation} \label{alam5}
  \dot r_i^k(b^k) \to \dot r_i(t_0+) \qquad \text{as } k \to \infty.
\end{equation} 

Next we prove \eqref{alam1}--\eqref{alam5}. Since $\dot{r}_i$ is left-continuous at $t_0$, we have that for any $\e_1 \in (0,\alpha/2)$ there exists $\delta_1 \in (0,\delta]$ such that 
\[
  \big| \dot{r}_i(t_0 - s) - \alpha \big| \leq \e_1 \qquad (s \in (0,\delta_1)).
\]
Hence, for any $s \in (0,\delta_1)$,
\begin{equation} \label{alam1:pf:1}
 r_i(t_0 - s) 
  = -\int_{t_0 - s}^{t_0} \dot{r}_i(t) dt
  \leq -s(\alpha - \e_1) < 0.
\end{equation}

Let 
\[
  a^k := \inf \{t > t_0 - \delta_1 \ ; \ r_i^k(t) = 0 \}. 
\]  
Relying on the uniform convergence of $r_i^k$ to $r_i$ (see \eqref{converge}), we obtain from \eqref{alam1:pf:1} that
\[
  \liminf_{k \to \infty} a^k \geq t_0.
\]
Then, to prove \eqref{alam1}, it is left to show that $\limsup_k a^k \le t_0$. We reason by contradiction. Suppose that there exists $\delta_2 > 0$ such that $a^k \geq t_0 + \delta_2$ for a subsequence of $k$ (not relabeled). Then, for any $\delta_3 \in (0, \delta_1 \wedge \delta_2]$ and any $t \in B(t_0, \delta_3)$
\begin{equation} \label{alam1:pf:2}
  |\dot r^k(t) - \alpha|
  \leq |\dot{r}_i^k(t) - \dot{r}_i^k(t_0 - \delta_3)| + |\dot{r}_i^k(t_0 - \delta_3) - \dot{r}_i(t_0 - \delta_3)| + |\dot{r}_i(t_0 - \delta_3) - \alpha|.
\end{equation}
The third term is bounded by $\e_1$ for a.e.\ choice of $\delta_3$. For the first term, we note from the definition of $a_k$ and Lemma \ref{l:xk}\eqref{l:xk:xddCI} that $\dot{r}_i^k$ is Lipschitz continuous on $\overline{B(t_0, \delta_3)}$ with Lipschitz constant $L$ independent of $k$ and $\delta_3$. Hence,
\[
  |\dot{r}_i^k(t) - \dot{r}_i^k(t_0 - \delta_3)| \leq L |t - (t_0 - \delta_3)| \leq 2L \delta_3.
\]
Finally, by \eqref{converge}, the second term in \eqref{alam1:pf:2} converges pointwise for a.e.\ $\delta_3$ as $k \to \infty$. Hence, by choosing $\delta_3$ small enough and away from a nullset, we obtain from \eqref{alam1:pf:2} that
\begin{equation} \label{alam1:pf:3}
|\dot{r}_i^k(t) - \alpha| \leq 3 \e_1 \qquad ( t \in B(t_0, \delta_3))
\end{equation}
for all $k$ large enough. Then, taking $\e_1 < \alpha/6$, we obtain
\[
\dot{r}_i^k(t) \geq \alpha/2 \qquad  (t \in B(t_0, \delta_3))
\]
for all $k$ large enough. Since $r_i^k(t_0) \to r_i(t_0) = 0$ as $k \to \infty$, we conclude that $r_i^k(t_0 + \delta_3) > 0$ for all $k$ large enough. Thus, $a_k < t_0 + \delta_3 \leq t_0 + \delta_2$ for all $k$ large enough, which contradicts our previous assumption on $a_k$. This concludes the proof of \eqref{alam1}.

To prove \eqref{alam2}, we reuse \eqref{alam1:pf:3}. Indeed, since by \eqref{alam2} $a_k \in B(t_0, \delta_3)$ for all $k$ large enough, \eqref{alam1:pf:3} holds for all $t \in [t_0 - \delta_3, a_k]$, and thus in particular for $t = a_k$. Since $\e_1$ is arbitrary, we conclude \eqref{alam2}.

Next we prove \eqref{alam3}. Using the ODE for $x_i^k$ on $(a^k, b^k)$ (i.e., a time interval during which $x_i^k \notin \overline \Omega$), we get from the uniform bounds on $F_i$, $\nabla d_s$, $\nabla^2 d_s$ and $\dot{x}_i^k$ that
\begin{equation} \label{alam3:pf:1}
\begin{aligned}
\ddot{r}_i^k(t) &= \frac{d^2}{dt^2} d_s(x_i^k(t)) = \ddot x_i^k(t) \cdot \nabla d_s(x_i^k(t)) + \left(\nabla^2 d_s(x_i^k(t))\dot{x}_i^k(t)\right)\cdot\dot{x}_i^k(t)
\\
&= -k d_s(x_i^k(t)) + F_i(t, X^k(t))\cdot \nabla d_s(x_i^k(t)) + \left(\nabla^2 d_s(x_i^k(t))\dot{x}_i^k(t)\right)\cdot\dot{x}_i^k(t)
\leq -k r_i^k(t) + C
\end{aligned}
\end{equation}
for all $t \in (a^k, b^k)$, where $C > 0$ is a constant independent of $k$ and $t$. We compare this differential inequality with the differential equation
\[
\left\{
\begin{aligned}
  \ddot{R}^k(t) &= -k R^k(t) + C  \qquad (t > a^k), \\
  R^k(a^k) &= r_i^k(a^k) = 0, \\
  \dot {R}^k(a^k) &= \dot{r}_i^k(a^k) =: v_i^k.
\end{aligned}
\right.
\]
This differential equation can be solved explicitly. The solution is
\[
R^k(t) = \frac{v_i^k}{\sqrt{k}} \sin(\sqrt{k}(t-a^k)) - \frac{C}{k} \cos(\sqrt{k}(t-a^k)) + \frac{C}{k}.
\]
Let 
\[ 
  t^k 
  := \inf \{ t > a^k~;~ R^k(t) \leq 0 \} 
  = \frac{2}{\sqrt{k}}\left(\arctan\left(\frac{-v_i^k\sqrt{k}}{C}\right) + \pi \right) + a^k.
\]
Note that $w^k(t) := R^k(t) - r_i^k(t)$ satisfies
\[
\left\{
\begin{aligned}
  \frac{d^2}{dt^2} w^k(t) &\geq 0  \qquad (t \in (a^k, b^k)) \\
  w^k(a^k) &= 0, \\
  \dot w^k(a^k) &= 0,
\end{aligned}
\right.
\]
and thus $0 \le w^k(t) = R^k(t) - r_i^k(t) \leq R^k(t)$ for all $t \in (a^k, b^k)$. Hence, $t_k \geq b_k$, and thus 
\[
b^k - a^k \le t^k - a^k = \frac{2}{\sqrt{k}}\left(\arctan\left(\frac{-v_i^k\sqrt{k}}{C}\right) + \pi\right) \leq \frac{\pi}{\sqrt{k}} \to 0
\]
as $k\to\infty$. This proves \eqref{alam3}.

Next, we prove \eqref{alam4}. We set 
\[ e^k(t) := \frac{1}{2} \dot{r}_i^k(t)^2 + \frac k2 r_i^k(t)^2 \] 
as the energy. We recall from \eqref{alam3:pf:1} that $\ddot r_i^k(t) 
= -k r_i^k(t) + g^k(t)$
for all $t \in (a^k, b^k)$, where $\| g^k \|_{C(B(t_0, \delta))}$ is uniformly bounded in $k$. Then,
\[
\begin{aligned}
\dot{e}^k(t) &=  \ddot r_i^k(t) \dot r_i^k(t) + k \dot r_i^k(t) r_i^k(t) \\
&= \big( g^k(t) -k r_i^k(t) \big) \dot r_i^k(t) + k \dot r_i^k(t) r_i^k(t) \\
&= g^k(t) \dot r_i^k(t)
\end{aligned}
\]
for all $t \in (a^k, b^k)$. Hence,
\[
\begin{aligned}
|e^k(b^k) - e^k(a^k)| 
&= \left|\int_{a^k}^{b^k} g^k(t) \dot r_i^k(t) dt \right|\\
& \le \left|b^k - a^k\right| \|g^k\|_{C(B(t_0, \delta))} \|\dot{x}_i^k\|_{\infty},
\end{aligned}
\]
which by \eqref{alam3} and Lemma \ref{l:xk}\eqref{l:xk:xd} vanishes as $k\to\infty$. Since $r_i^k(a^k) = r_i^k(b^k) = 0$, we then also have
\[
|\dot{r}_i^k(b^k)^2 - \dot{r}_i^k(a^k)^2| \to 0
\]
as $k\to\infty$. Since $\dot{r}_i^k(a^k) \to \alpha > 0$ and $\dot{r}_i^k(b^k) \leq 0$ by the definition of $b_k$, \eqref{alam4} follows.

Finally, we prove \eqref{alam5}. Similar to \eqref{alam1:pf:2}, we estimate
\begin{equation} \label{alam5:pf:1}
  |\dot r_i^k(b^k) - \dot r_i(t_0+)|
  \leq |\dot r_i^k(b^k) - \dot r_i^k(t_0 + \delta_4)| + |\dot r_i^k(t_0 + \delta_4) - \dot r_i(t_0 + \delta_4)| + |\dot r_i(t_0 + \delta_4) - \dot r_i(t_0+)|
\end{equation}
for any $\delta_4 > 0$ and any $k$ large enough. By \eqref{alam2} and \eqref{alam4}, $\dot r_i^k(b^k) \leq -\alpha/2 < 0$ for all $k$ large enough. Hence, $r_i^k < 0$ on $(b^k, b^k + \delta^k)$ for some $\delta^k > 0$. By Lemma \ref{l:xk}\eqref{l:xk:xddCI}, there exists $\delta_5 > 0$ such that $\delta^k \geq \delta_5$, and such that $\dot r_i^k$ is Lipschitz continuous on $[b_k, b_k + \delta_5]$ with Lipschitz constant $L$ independent of $k$. Hence, for $\delta_4 \leq \delta_5$ and any $k$ large enough, the first term in \eqref{alam5:pf:1} can be estimated as
\[
  |\dot r_i^k(b^k) - \dot r_i^k(t_0 + \delta_4)|
  \leq L |b^k - (t_0 + \delta_4)|
  \leq L (|b^k - t_0| + \delta_4).
\] 
For the second term in \eqref{alam5:pf:1}, if $\delta_4$ is not in a certain nullset, it follows from $r_k \to r$ pointwise a.e.\ on $B(t_0, \delta)$ that 
\[
|\dot r_i^k(t_0 + \delta_4) - \dot r_i(t_0 + \delta_4)| \to 0
\]
as $k \to \infty$. Collecting our findings above,
\begin{equation*} 
  \lim_{k \to \infty} |\dot r_i^k(b^k) - \dot r_i(t_0+)|
  \leq L \delta_4 + |\dot r_i(t_0 + \delta_4) - \dot r_i(t_0+)|.
\end{equation*}
Since $\delta_4$ can be taken arbitrarily small, we conclude \eqref{alam5}.

Until here, we have proven \eqref{r:equal} under the additional assumption that $\dot r_i(t_0-) > 0$. Analogously, one can prove \eqref{r:equal} under the alternative assumption $\dot r_i(t_0+) < 0$ by following the same steps, but by working backwards in time. Since $r_i \leq 0$ on $B(t_0, \delta)$ and $r_i(t_0) = 0$, we know a priori that $\dot r_i(t_0-) \geq 0 \geq \dot r_i(t_0+)$. Hence, the only case left to check is $\dot r_i(t_0-) = 0 = \dot r_i(t_0+)$, but in this case \eqref{r:equal} is obvious. This completes the proof of \eqref{r:equal}, which completes the proof of Theorem~\ref{exTh}.


\section{Local uniqueness of solutions}\label{sec4}
\setcounter{equation}{0}
In this section we prove local uniqueness of solutions to Problem~\ref{ProbP} as stated in Theorem~\ref{uniTh}. In addition, we give a counterexample to global-in-time uniqueness. 

\subsection{Proof of Theorem~\ref{uniTh}}

Let $X, Y$ be two solutions to Problem~\ref{ProbP} and let $T_0$ be as in Theorem~\ref{uniTh}. We prove Theorem~\ref{uniTh} by contradiction; suppose that there exists a $T_1\in [0,T_0)$ and an $i \in \{1,\ldots,n\}$ such that for some decreasing sequence $t_\ell \downarrow T_1$ as $\ell \to \infty$ there holds
\begin{equation}\label{uni}
\left\{
\begin{aligned}
X(t) &= Y(t)\qquad &(0\le t\le T_1),\\
x_i(t_\ell) &\neq y_i(t_\ell)\qquad &(\ell=1,2,\ldots).\\
\end{aligned}
\right.
\end{equation} 
We split two cases:
\begin{description}
  \item[Case 1:] $x_i(T_1)\in \Omega$. Let $\delta > 0$ such that the ball $B_{\delta}(x_i(T_1))$ is contained in $\Omega$. Since $x_i$ and $y_i$ are continuous, there exists $\eta>0$ such that $x_i(t), y_i(t)\in B_{\delta}(x_i(T_1))\subset \Omega$ for all $t\in [T_1, T_1+\eta)$, and thus $I^i_{\partial\Omega} \cap [T_1, T_1+\eta) = \emptyset$. Then, since $\supp \rho_i \subset I^i_{\partial\Omega}$, there exists a unique classical solution for Problem~\ref{ProbP} on $(T_1,T_1+\eta)$ from the Lipschitz condition \eqref{2}. This contradicts \eqref{uni}.
  
  \item[Case 2:] $x_i(T_1)\in \partial\Omega$ with $\alpha:= \dot{x}_i(T_1-)\cdot\nu(x_i(T_1))>0$. From \eqref{uni} we note that $\alpha = \dot{y}_i(T_1-)\cdot\nu(y_i(T_1))$. Then, from the reflection rule \eqref{reflection}, we obtain $\dot{x}_i(T_1+)\cdot\nu(x_i(T_1))=\dot{y}_i(T_1+)\cdot\nu(y_i(T_1))=-\alpha <0$. Hence, there exists an $\eta>0$ such that $x_i(t), y_i(t)\in \Omega$ for all $T_1<t<T_1+\eta$. Then, a similar argument as in Case 1 yields a contradiction with \eqref{uni}.
\end{description}

\subsection{Counterexample to global-in-time uniqueness}
\label{ss:ce}

In this section we motivate by means of an example that Definition \ref{Def} cannot provide a unique solution for the general $\Omega$ and $F$ which we consider. Our example is motivated by the one given in Section 3b of \cite{schatzman1978class}. That example is conceptualized by thinking of a tennis player who succeeds in making his ball bounce higher and higher from a rest position by only hitting the ball downwards. The idea of this counterexample goes as least back to \cite{taylor1976grazing}. However, in these references the examples are given in an abstract way; we aim to construct a more explicit version of the example.

Our counterexample is set on the one-dimensional ($m=1$) halfline $\Omega:=\{x\in \R~;~ x>0\}$ with $x^\circ = x_1^\circ = 0$ and $v^\circ = v_1^\circ = 0$ as initial conditions. This initial condition corresponds in the setting of Theorem \ref{uniTh} to $T_0 = 0$. While this initial condition is technically not allowed in Theorem \ref{exTh}, it can easily be obtained by having the particle start inside $\Omega$ at $t = 0$, choose an $F$ so that the particle enters $\partial \Omega$ at $0$ speed at $t = 1/2$, keep the particle on the boundary until $t = 1$, and then shift time backwards by $1$ unit. For any given $L \in \N$, we will construct a function $F(t):=F_1(t,x(t))$ of class $C^L$ which is non-positive (this is in line with the setting of the tennis player). For any such $F$, the $0$-function is a solution according to Definition \ref{Def} (in this case, the density of $\rho = \rho_1$ is $- F(t)$), and thus it remains to construct a non-zero solution.  

Before constructing an $F$ which results in a non-zero solution, we remark that our one-dimensional setup is not restrictive. Indeed, simply by adding a second dimension to our current setup, $\Omega$ becomes the halfplane, and $x_2$ can be considered as constant. Our example will also trivially extend to the case in which $v_2^\circ \neq 0$, in which case $x_2(t)$ changes linearly in time. In this case, $F$ need not explicitly depend on time, and instead dependence on $x_2(t)$ suffices. More generally, our example can be extended to higher dimensions and a curved boundary $\partial \Omega$.

Next we construct $F$ in our one-dimensional setting such that Definition \ref{Def} has a non-zero solution $x(t)$. We do this by tying together rescaled and translated versions of an auxiliary problem where a particle with position $z(t)$ only bounces at the start and end time. Our goal in this auxiliary problem is that the speed of impact $-v_1$ at the end time $1$ is larger than the initial speed $v_0$ at $t=0$, i.e., $v_1-v_0 > 0$. 

To reach this goal, let $f\in C_0^{\infty}(0,1)$ with $f(t)\ge 0$ satisfy
\begin{equation} \label{f:int:cond}
  \int_0^1 \left( 2s - 1 \right) f(s)ds 
  >0.
\end{equation}
For $v_0 > 0$ to be chosen later, let $z$ be the solution to
\[
\left\{
\begin{aligned}
z''(t)&=-f(t),\\
z(0)&=0,\\
z'(0)&=v_0.
\end{aligned}
\right.
\]
It is easy to see that $z(t) = v_0 t - \int_0^t(t-s)f(s)ds$. Next we choose $v_0$ such that the particle hits the boundary again at $t = 1$, i.e., such that $z(1) = 0$. This yields
\begin{equation*}
  v_0=\int_0^1(1-s)f(s)ds 
\end{equation*}
We then find that
\begin{equation*} 
  v_1 = -z'(1) = \int_0^1sf(s)ds.
\end{equation*}
Finally, to test the requirement $v_1-v_0 > 0$, we compute
\[
  v_1-v_0
  = \int_0^1 \left( 2s - 1 \right) f(s)ds,
\]
and observe from \ref{f:int:cond} that this value is indeed positive.

Next we rescale the auxiliary problem. For $a,b > 0$ to be chosen later, let
\[
y(t) := b z\left(\frac{t}{a}\right)\qquad (t\in [0,a]).
\]
Then, $y$ satisfies
\[
\left\{
\begin{aligned}
y''(t) &=-\frac{b}{a^2} f\left(\frac{t}{a}\right),\\
y(0)&=y(a)=0,\\
y'(0)&=\frac{b}{a}v_0,\\
y'(a)&=-\frac{b}{a}v_1.
\end{aligned}
\right.
\]

Next we tie the rescaled problems together by shifting them in space. Figure \ref{nonunique} illustrates the corresponding construction. The reflection rule dictates that minus the velocity at the end time of the scaled auxiliary problem to the left has to equal the velocity at initial time of the scaled auxiliary problem to the right, i.e., $v_0=\frac{b}{a}v_1$. This requires $b < a < 1$. The shifted time interval of the $n$-th bounce to the left is given by $I_n:=(\frac{a^{n+1}}{1-a}, \frac{a^{n}}{1-a})$. The resulting function $F$ is
\begin{equation}\label{sF}
F(t)=\left\{
\begin{array}{ll}
-\left(\dfrac{b}{a^2}\right)^n f\left(\left(t-\dfrac{a^{n+1}}{1-a}\right) a^{-n}\right) &\qquad (t\in I_n, n\in\mathbb{Z}), \\
0 &\qquad \text{otherwise},
\end{array}
\right.
\end{equation}
where we allow for $t < 0$. For this non-positive $F$, a non-zero solution $x$ to Problem \eqref{ProbP} is
\begin{equation}\label{ce:sol}
x(t)=\left\{
\begin{array}{ll}
b^n z\left(\left(t- \dfrac{a^{n+1}}{1-a}\right)a^{-n}\right) &\qquad (t\in I_n, n\in\mathbb{Z}),\\
0 &\qquad \text{otherwise},
\end{array}
\right.
\end{equation}
which we also extend by $0$ for $t < 0$.

\begin{figure}[htbp]
  \begin{center} 
    \includegraphics[width=140mm]{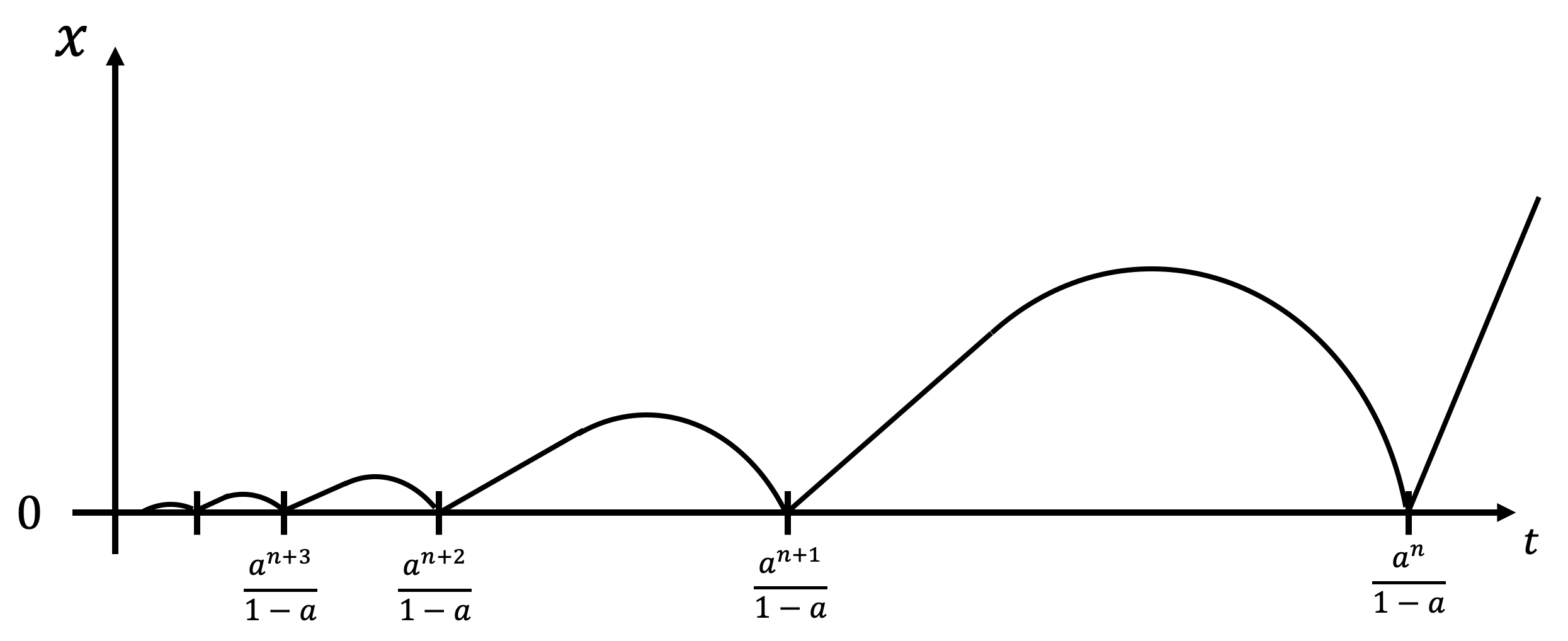}
  \end{center}
  \caption{Sketch of the non-zero solution $x(t)$.}
  \label{nonunique}
\end{figure}

Next we check that $x$ is indeed a solution to Problem \ref{ProbP}. By construction, $x$ is a classical solution on each $I_n$, and at the endpoints of each $I_n$ the reflection rule is satisfied. Therefore, Definition \ref{Def}(ii),(iii) is satisfied. Checking Definition \ref{Def}(i) requires further care. We verify it on the time interval $[-1,T]$, where $T = \frac12 \frac{1+a}{1-a}$, which is the midpoint of the interval $I_0$. We observe that $x$ is continuous and piecewise smooth on $[-1,0)$ and on each $I_n$. We also observe that
\begin{equation}\label{x:CvB:bds}
  \| x \|_{C(\overline{I_n})} \leq \| z \|_{C([0,1])} b^n, \quad 
  \| x' \|_{L^\infty(I_n)} \leq \| z' \|_{C([0,1])} \left(\frac{b}{a}\right)^n, \quad 
  \| x'' \|_{L^\infty(I_n)} \leq \| z'' \|_{C([0,1])} \left(\frac{b}{a^2}\right)^n.
\end{equation}
Hence, $x$ is Lipschitz continuous on $[-1,T]$, and thus $x \in W^{1,\infty}(-1,T)$. To check that $\dot x \in BV(-1,T)$, we construct a right-continuous $u \in BV(-1,T)$, and show that it is a representative for $\dot x$. With this aim, we set $t_n := \tfrac{a^{n+1}}{1-a}$ as the left endpoint of $I_n$, and define the Borel measure
\[
  \mu (A) 
  := \sum_{n=0}^\infty \bigg( 2 v_0 \left(\frac{b}{a}\right)^n \delta_{t_n} (A) + \int_{A \cap I_n} \big( x |_{I_n} \big)''(t) dt \bigg)
  \qquad \text{for all intervals } A \subset (-1,T).
\]
Since 
\begin{align*}
  |\mu|((-1,T))
  &\leq \sum_{n=0}^\infty \bigg( 2 v_0 \left(\frac{b}{a}\right)^n + \int_{I_n} \| x'' \|_{L^\infty(I_n)} dt \bigg) \\
  &\leq \sum_{n=0}^\infty \bigg( 2 v_0 \left(\frac{b}{a}\right)^n + \| z'' \|_{C([0,1])}  \left(\frac{b}{a^2}\right)^n |I_n| \bigg) \\
  &\leq C \sum_{n=0}^\infty \left(\frac{b}{a}\right)^n,
\end{align*}
it holds that $\mu \in \mathcal M_+((-1,T))$ if $b < a$. Hence, the right-continuous function
\[
  u(t) := x'(T) - \mu((t,T))
\]
is in $BV(-1,T)$. It is easy to check (for instance, by induction over $I_n$) that $u(t) = \dot x(t)$ for all $t > 0$. Since $u$ is right-continuous, we obtain from \eqref{x:CvB:bds} that
\begin{equation}\label{limitat0}
  u(0) = u(0+) = \dot x(0+) = 0.
\end{equation}
Finally, since $|\mu|((-1,0]) = 0$, $u$ is constant on $(-1,0]$. Putting these findings on $u$ together, we conclude that $\dot x = u \in BV(-1,T)$. This concludes the proof that $x$ satisfies Definition \ref{Def}.

Next we check the regularity of $F$. Since $f$ is smooth and supported inside $(0,1)$, its scaled and shifted copies are also smooth and supported inside $I_n$. Therefore, $F$ is smooth on $(-\infty, 0) \cup (0,T)$. The regularity of $F$ at $t=0$ requires further care, and can be checked similar to that of $x$. Indeed,
\[
  \| F^{(\ell)} \|_{C(\overline{I_n})} \leq \| f^{(\ell)} \|_{C([0,1])} \left(\frac{b}{a^{\ell + 2}}\right)^n.
\]
Thus, if $b < a^L$, then $F$ is $L$ times continuously differentiable at $0$, and thus $F \in C^L((-\infty, T])$.

Next we complete our counterexample by choosing $a,b$. All conditions on $a,b$ which we have used can be summarized by the two conditions $0 < b < a^L < 1$ and $v_0 = \frac ba v_1$. Since such $a,b$ are easy to construct for $L =1$, we focus on the case $L \geq 2$. By choosing 
\[ a \geq \left( \frac{v_0}{v_1} \right)^{\tfrac1{L-1}} \] 
and $b := a v_0 / v_1$, it is easy to check that both conditions on $a,b$ are satisfied.

\section{Energy conservation}\label{sec5}
\setcounter{equation}{0}

In the introduction we mentioned that the solution concept to Problem \ref{ProbP} is different from that in \cite{schatzman1978class}. The solution concept in \cite{schatzman1978class} is obtained from Definition \ref{Def} by replacing condition (iii) by two conditions: first, the speed of each particle has to be continuous, and second, the energy has to be conserved. Proposition \ref{speed} states that Definition \ref{Def} yields continuous speed for each particle. In this section we investigate the second condition about the conservation of energy.

While the setting in \cite{schatzman1978class} is restricted to a conservative force, we can still extend this concept to nonconservative force fields $F$. We say that a solution $X$ of Problem \ref{ProbP} by Definition \ref{Def} conserves the energy if for any $i\in \{1,\ldots,n\}$ and all $0\le s_1 < s_2 \le T$, 
\begin{equation}\label{energy}
\frac{1}{2} |\dot{x}_i(s_2)|^2 - \frac{1}{2}|\dot{x}_i(s_1)|=\int_{s_1}^{s_2} F_i(t,X(t))\cdot \dot{x}_i(t) dt.
 \end{equation}
Here, we interpret the term in the right-hand side as the work done by $F_i$ on the time interval $(s_1,s_2)$.

\subsection{Energy conservation in a particular case}

In this section we prove that the energy is conserved if the number of nonzero-velocity collisions with the boundary is finite. This condition allows for particles to move along the boundary, but excludes cases such as the counterexample in Section \ref{ss:ce}. The following theorem makes this statement precise.
 
\begin{Th}\label{conser}
Let $X(t)=\{x_i(t)\}_{i=1}^n$ be a solution to Problem~\ref{ProbP}, and let $\rho_i\in\mathcal{M}_+((0,T))$ for $i=1,\ldots,n$ be the corresponding measures as in Definition~\ref{Def}(ii). If for any $i=1,\ldots,n$ there exist $N \in \N$, $a \in L^1(0,T)$, $t_k \in (0,T)$, $a_k > 0$ for $k = 1,\ldots,N$ such that
\begin{equation}\label{rho}
\rho_i(A) = \sum_{t_{k}\in A} a_{k} + \int_A a(t)dt
\end{equation}
for all open intervals $A \subset (0,T)$, then $X$ conserves the energy, i.e., $X$ satisfies \eqref{energy}.
\end{Th}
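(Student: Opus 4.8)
The plan is to compute the left-hand side of \eqref{energy} directly, exploiting the measure structure \eqref{rho} together with the continuity of speed from Proposition \ref{speed}. Fix $i$. By Definition \ref{Def}(ii), the distributional second derivative of $x_i$ is the $\R^m$-valued measure $\ddot x_i = F_i(t,X(t))\,dt - \nu(x_i(t))\,d\rho_i$. Substituting \eqref{rho}, I would split this into an absolutely continuous part $g(t)\,dt$, where $g(t) := F_i(t,X(t)) - a(t)\,\nu(x_i(t))$, and an atomic part $-\sum_{k=1}^N a_k\,\nu(x_i(t_k))\,\delta_{t_k}$. The key structural observation is that \eqref{rho} carries no Cantor (singular continuous) component, so these are the only two contributions, and consequently the jumps of $\dot x_i$ occur exactly at the finitely many times $t_1,\ldots,t_N$.

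Given this decomposition, I would partition $(s_1,s_2)$ by the collision times $t_k$ it contains. On each open subinterval free of collision times, $D\dot x_i$ restricts to $g\,dt$ with $g\in L^1$, so $\dot x_i$ is absolutely continuous there; since $\dot x_i\in L^\infty$, the map $\tfrac12|\dot x_i|^2$ is absolutely continuous with a.e.\ derivative $\dot x_i\cdot g$, and the fundamental theorem of calculus gives the energy increment over that subinterval as $\int \dot x_i\cdot g\,dt$. Across each $t_k$, Proposition \ref{speed} ensures the speed is continuous, i.e.\ $|\dot x_i(t_k+)| = |\dot x_i(t_k-)|$, so the kinetic energy does not jump and the atoms contribute nothing. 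Summing over the finitely many subintervals yields
\[
\tfrac12|\dot x_i(s_2)|^2 - \tfrac12|\dot x_i(s_1)|^2 = \int_{s_1}^{s_2} \dot x_i(t)\cdot g(t)\,dt = \int_{s_1}^{s_2} F_i(t,X(t))\cdot\dot x_i(t)\,dt - \int_{s_1}^{s_2} a(t)\,\big(\dot x_i(t)\cdot\nu(x_i(t))\big)\,dt.
\]

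It then remains to show the final integral vanishes, which is the heart of the argument. Since $\supp\rho_i\subset I_{\partial\Omega}^i$ and $I_{\partial\Omega}^i$ is closed, the density $a$ is supported (up to a Lebesgue-null set) in $I_{\partial\Omega}^i$, so it suffices to prove the geometric fact that $\dot x_i(t)\cdot\nu(x_i(t)) = 0$ for a.e.\ $t\in I_{\partial\Omega}^i$. To this end I would set $\phi(t) := d_s(x_i(t))$, which is Lipschitz and satisfies $I_{\partial\Omega}^i = \{\phi = 0\}$. Invoking that the derivative of a Lipschitz function vanishes a.e.\ on any of its level sets, together with the chain rule $\phi'(t) = \nabla d_s(x_i(t))\cdot\dot x_i(t) = \nu(x_i(t))\cdot\dot x_i(t)$ (valid a.e.\ on $I_{\partial\Omega}^i$, where $x_i(t)\in\partial\Omega$, $d_s$ is $C^3$, and $\nabla d_s = \nu$), I obtain $\nu(x_i(t))\cdot\dot x_i(t) = 0$ for a.e.\ $t\in I_{\partial\Omega}^i$. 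Because $a\,dt$ is absolutely continuous with respect to Lebesgue measure, this forces $\int_{s_1}^{s_2} a\,(\dot x_i\cdot\nu(x_i))\,dt = 0$, and \eqref{energy} follows.

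I expect the main obstacle to be twofold: the careful measure-theoretic bookkeeping—verifying that \eqref{rho} genuinely produces no Cantor part and that the jump set of $\dot x_i$ is precisely $\{t_k\}$, so that the fundamental theorem of calculus applies on each collision-free subinterval—and, above all, the geometric lemma that the normal velocity vanishes a.e.\ on the contact set. This lemma is what converts the two mechanisms separately (continuity of speed controlling the bounces encoded in the atoms, and tangential sliding controlling the absolutely continuous part) into a single energy-conservation identity.
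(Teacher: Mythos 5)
Your proposal is correct, and it reaches \eqref{energy} by a route that differs from the paper's in both main technical steps, so a comparison is worthwhile. Both arguments partition $(s_1,s_2)$ at the atoms $t_k$ and use Proposition \ref{speed} to see that the kinetic energy does not jump across them. On the atom-free subintervals, however, the paper does not argue via the chain rule directly: it tests the weak form \eqref{weakform} with $\psi_\e=\eta_\e\dot x_i$, where $\eta_\e$ is a piecewise-linear cutoff (admissible by Remark \ref{remarkop}), and passes to the limit $\e\to 0$; your observation that $\dot x_i\in W^{1,1}$ on such subintervals, so that $\tfrac12|\dot x_i|^2$ is absolutely continuous and the fundamental theorem of calculus applies, is an equivalent but more elementary execution of the same idea, avoiding the cutoff limit. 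The genuine divergence is in the key step, the vanishing of $\int a\,(\dot x_i\cdot\nu(x_i))\,dt$. The paper proves the \emph{pointwise} statement that the normal velocity vanishes at every contact time $s$ in an atom-free interval, by combining the reflection rule with the structure of $\rho_i$: if $\alpha:=\dot x_i(s-)\cdot\nu(x_i(s))>0$, then \eqref{reflection} forces $\rho_i(\{s\})=2\alpha>0$, contradicting absolute continuity there; hence $\dot x_i(s+)=\dot x_i(s-)$ is tangential. You instead prove the a.e.\ statement via the level-set lemma for the Lipschitz function $\phi=d_s\circ x_i$, namely $\phi'=0$ a.e.\ on $\{\phi=0\}=I_{\partial\Omega}^i$, which never invokes Definition \ref{Def}(iii). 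Your lemma is more robust: it applies to any curve satisfying Definition \ref{Def}(i)--(ii), independently of the reflection rule, and it annihilates any contribution absolutely continuous with respect to Lebesgue measure; the paper's argument is tied to the reflection rule but yields the stronger everywhere conclusion. Two small points you should make explicit to close your argument: first, that $a\ge 0$ a.e.\ (apply \eqref{rho} to small open intervals avoiding the finitely many atoms), which is what upgrades $\supp(a\,dt)\subset I_{\partial\Omega}^i$ to $a=0$ a.e.\ outside $I_{\partial\Omega}^i$; second, the chain rule $\phi'(t)=\nu(x_i(t))\cdot\dot x_i(t)$ at a.e.\ $t\in I_{\partial\Omega}^i$, which uses that $d_s$ is $C^3$ near $\partial\Omega$, that $\nabla d_s=\nu$ on $\partial\Omega$, and that the Lipschitz curve $x_i$ is differentiable a.e.
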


We note for a particle $x_i$ that $t_k$ are the times at which it hits the boundary with normal velocity $a_k/2$, and that nonzero parts of $a(t) \geq 0$ correspond to movement along the boundary.

\begin{proof}[Proof of Theorem \ref{conser}]
Take $i\in\{1,\ldots,n\}$ and $0 \leq s_1 < s_2 \leq T$ arbitrarily. By relabelling if necessary we may assume that $0 =: t_0 < t_1 < t_2 < \ldots < t_N < t_{N+1} := T$. We start with the case in which $(s_1,s_2)\subset (t_k,t_{k+1})$ for some $k\in\{1,\ldots,N\}$. First, since the measure $\rho_i$ is absolutely continuous on $(s_1,s_2)$, we obtain from \eqref{weakform} that $\ddot{x}_i = F_i - \chi_{I_{\partial\Omega}^i} (\nu\cdot x_i) \rho_i \in L^1(s_1,s_2)$, and thus $\dot{x}_i\in W^{1,1}(s_1,s_2;\R^m)$.
Second, we define the cut-off function $\eta_\e$ with $\supp \eta_\e \subset [s_1,s_2]$ by
\[
\eta_\e(t):=\left\{
\begin{aligned}
&(t-s_1)/\e& (s_1\le t < s_1 +\e),\\
&1 & (s_1 +\e\le t < s_2-\e),\\
&(s_2-t)/\e & (s_2-\e\le t \le s_2),
\end{aligned} 
\right.
\]
which is in $W_0^{1,\infty}(s_1,s_2)$. Then, $\psi_\e:=\eta_\e \dot{x}_i \in W_0^{1,1}(0,T,\R^m)$. Hence, by Remark~\ref{remarkop} and the weak form \eqref{weakform} it follows that
\begin{equation}\label{pf:csv:1}
    -\int_0^T \dot{x}_i(t)\cdot \dot{\psi_\e}(t) dt = \int_0^T F_i(t,X(t)) \cdot \psi_\e(t) dt - \int_{I_{\partial\Omega}^i}\nu(x_i(t))\cdot \psi_\e(t)d\rho_i(t).
\end{equation}
In preparation for passing to $\e\to 0$, we rewrite the left-hand side as
\[
\begin{aligned}
-\int_0^T \dot{x}_i(t)\cdot \dot{\psi_\e}(t) dt &= -\int_0^T  \dot{x}_i(t)\cdot (\dot{\eta}_\e(t) \dot{x}_i(t)  + \eta_\e(t) \ddot{x}_i(t)) dt\\
&= -\int_{s_1}^{s_2} \eta_\e(t) \frac{d}{dt}\left(\frac{1}{2}|\dot{x}_i(t)|^2\right) dt - \int_{s_1}^{s_2} \dot{\eta}_\e(t) |\dot{x}_i(t)|^2 dt\\
&= \int_{s_1}^{s_2} \dot{\eta}_\e(t) \frac{1}{2} |\dot{x}_i(t)|^2 dt -\int_{s_1}^{s_2} \dot{\eta}_\e(t) |\dot{x}_i(t)|^2 dt\\
&= \frac{1}{\e} \int_{s_2-\e}^{s_2}  \frac{1}{2}|\dot{x}_i(t)|^2 dt - \frac{1}{\e}\int_{s_1}^{s_1 + \e} \frac{1}{2}|\dot{x}_i(t)|^2 dt.
\end{aligned}
\]
For the right-hand side of \eqref{pf:csv:1}, we use $\supp \eta_\e = [s_1,s_2]$, $\eta_\e(s_1) = \eta_\e(s_2) = 0$ and $d\rho_i(t)=a(t)dt$ on $(s_1,s_2)$ to rewrite it as
\[
\begin{aligned}
\int_{s_1}^{s_2} \eta_\e(t) \, F_i(t,X(t))\cdot \dot{x}_i(t) dt - \int_{s_1}^{s_2} \eta_\e(t) \, \chi_{I_{\partial\Omega}^i}(t) \nu(x_i(t))\cdot \dot{x}_i(t) \, a(t)dt
\end{aligned}
\]
where
\[
\chi_{I_{\partial\Omega}^i}(t)=\left\{
\begin{aligned}
&1 &(t\in I_{\partial\Omega}^i),\\
&0 &(t\notin I_{\partial\Omega}^i).
\end{aligned}
\right.
\]
We claim that for any $\e > 0$
\begin{equation} \label{pf:csv:2}
  \int_{s_1}^{s_2} \eta_\e(t) \, \chi_{I_{\partial\Omega}^i}(t) \nu(x_i(t))\cdot \dot{x}_i(t) \, a(t)dt = 0.
\end{equation}
Indeed, if $(s_1,s_2)\cap I_{\partial\Omega}^i = \emptyset$, then $\chi_{I_{\partial\Omega}^i}(t)=0$ for all $t\in (s_1, s_2)$, and thus \eqref{pf:csv:2} holds. If $(s_1,s_2)\cap I_{\partial\Omega}^i \neq \emptyset$, then take any $s\in (s_1,s_2)\cap I_{\partial\Omega}^i$. If $\alpha := \dot{x}_i(s- ) \cdot \nu(x_i(s)) >0$, then the reflection rule \eqref{reflection} implies that $\rho_i(\{s\}) = 2 \alpha$, which contradicts with $\rho_i$ being absolutely continuous on $(s_1,s_2)$. Therefore, $\dot{x}_i(s - ) \cdot \nu(x_i(s)) =0$. Then, by \eqref{reflection} we have $\dot{x}_i(s+) = \dot{x}_i(s-)$. Thus, $\nu(x_i(s))\cdot\dot{x}_i(s)=0$ for all $s\in (s_1,s_2)\cap I_{\partial\Omega}^i$, which implies \eqref{pf:csv:2}. 

Collecting our results above, the passage to the limit $\e \to 0$ in \eqref{pf:csv:1} yields \eqref{energy}. This concludes the prove in the case where $\rho_i$ is absolutely continuous on $(s_1,s_2)$.

Next, we consider the general case $0 \leq s_1 < s_2 \leq T$. Let $k$ and $\ell$ be such that $s_1 \in [t_{k-1}, t_k)$ and $s_2 \in (t_\ell, t_{\ell+1}]$. Then, we apply the result above to obtain the energy conservation \eqref{energy} on the intervals
\[
(s_1, t_k), \ (t_k, t_{k+1}), \ldots, (t_{\ell-1}, t_\ell), (t_\ell, s_2).
\]
Adding up the energy conservation on each of these intervals and using  Proposition~\ref{speed}, we obtain that \eqref{energy} holds on $(s_1,s_2)$.
\end{proof}

\subsection{Energy conservation in the counterexample in Section~\ref{sec4}}

Since Theorem \ref{conser} shows that solutions $X$ to Problem \ref{ProbP} conserve the energy in particular cases, it is interesting to investigate whether adding an energy conservation condition to Definition \ref{Def} could resolve the issue of non-uniqueness of solutions. In particular, the nontrivial solution constructed in Section \ref{ss:ce} does not satisfy the conditions of Theorem \ref{conser}. 

We show in this section that the answer is negative; adding the energy conservation condition \eqref{energy} to Definition \ref{Def} still allows for multiple solutions. We show this by computing \eqref{energy} for the two solutions constructed in Section \ref{ss:ce}. In the remainder of this section, we use the notation from Section \ref{ss:ce}.

The trivial solution obviously satisfies \eqref{energy}. Hence, we focus on the nontrivial solution $x(t)$ defined in \eqref{ce:sol}. For $(s_1,s_2)\subset (-1,0)$, since $x |_{(s_1,s_2)} \equiv 0$, \eqref{energy} holds. For $(s_1,s_2)\subset (0,T)$ with $s_1 > 0$, $x$ hits $\partial \Omega$ only finitely many times on $(s_1,s_2)$, and thus \eqref{energy} is guaranteed to hold by Theorem \ref{conser}.
Since $|\dot{x}|$ is continuous on $[-1,T]$ (see Proposition \ref{speed}) and $|\dot{x}|(0-) = 0$, we have that $|\dot{x}|(0+) = 0$, and thus passing to the limit $s_1 \to 0+$ we obtain that \eqref{energy} holds on $(0,s_2)$.

Finally, for the remaining case in which $0\in (s_1,s_2)$, we can combine the results above on $(s_1,0)$ and $(0,s_2)$ in a similar manner as in the last paragraph of the proof of Theorem \ref{conser}. Hence, for any $(s_1,s_2)\subset (-1,T)$, \eqref{energy} holds. Thus, both solutions constructed in Section \ref{ss:ce} satisfy the energy conservation.

\section{Conclusion}
\label{sec:conc}

In Definition \ref{Def} we have given a proper meaning to the particle dynamics described by \eqref{P:informal} with elastic collisions at $\partial \Omega$. We proved that this definition leads to global-in-time existence of solutions; see Theorem \ref{exTh}. We also showed that a solution is unique up to the first time at which a particle hits $\partial \Omega$ in tangential direction; see Theorem \ref{uniTh}. At such an event, we showed by means of an example that the solution can be continued in two different manners, and that both such solutions conserve the energy. We leave it to future work to show that any solution conserves the energy, and to find a modification to Definition \ref{Def} which yields uniqueness of solutions.

\section*{Acknowledgements}

MK and PvM gratefully acknowledge support from JSPS KAKENHI Grant Number 20KK0058.
PvM gratefully acknowledges support from JSPS KAKENHI Grant Number 20K14358.

\end{document}